\documentclass[11pt]{article}
\usepackage{mathrsfs}
\usepackage{amssymb,amsfonts,amsmath,color,amsthm,url}
\usepackage[margin=1in]{geometry}
\newtheorem{thm}{Theorem}[section]
\newtheorem{lemma}[thm]{Lemma}

\newtheorem{prop}[thm]{Proposition}
\newtheorem{conj}[thm]{Conjecture}

\usepackage{titlesec}

\titleformat{\subsection}[runin]
  {\normalfont\bfseries}
  {\thesubsection.}
  {0.5em}
  {}[.]
\begin{document}
\title{A spectral condition for perfect matchings in 3-partite 3-graphs\thanks{Supported by  National Key Research and Development Program of China 2023YFA1010203}}
\author{ Hongliang Lu and Feihong Yuan
\medskip\\ {\small School  of Mathematics and Statistics}\\ {\small Xi'an Jiaotong University, Xi'an,   China}}

\date{}
\maketitle
\begin{abstract}
Let \(H\) be a 3-partite 3-uniform hypergraph whose three vertex classes all have size \(n\). For a vertex $v \in V(H)$, the link graph $N_H(v)$ is defined on $V(H)\setminus\{v\}$ with edge set $\{e\setminus\{v\}: v\in e\in E(H)\}$, and we denote by $\rho(N_H(v))$ its spectral radius. We prove that for every $\alpha>0$ there exists $n_0$ such that for all $n\ge n_0$ the following holds: if  
\[
\rho\bigl(N_H(v)\bigr) > \left(\frac{\sqrt{2}}{2}+\alpha\right)n
\]  
for every vertex $v\in V(H)$, then $H$ contains a perfect matching. This spectral condition is asymptotically best possible. 
\end{abstract}
\noindent\textbf{Keywords.}  matching; spectral radius; link graph; fractional matching. 

\section{Introduction}
Let $k\ge 2$ be an integer.  A $k$-uniform hypergraph, or simply a
$k$-graph, is a pair $H=(V,E)$, where $V$ is a finite set and
$E\subseteq \binom{V}{k}$. A \emph{matching} in $H$ is a collection of
pairwise vertex-disjoint edges.  We write $\nu(H)$ for the maximum size of a
matching in $H$.  A matching is \emph{perfect} if it covers every vertex of
$H$. For a real-valued function $h$ on a finite set $X$ and any subset $S \subseteq X$, we write $h(S) := \sum_{x \in S} h(x)$. 

For $S\subseteq V(H)$, let $d_H(S)$ denote the number of edges of $H$
containing $S$.  If $0\le \ell<k$, the minimum $\ell$-degree of $H$ is
\[
        \delta_\ell(H)
        :=
        \min\left\{d_H(S): S\in \binom{V(H)}{\ell}\right\}.
\]
In particular, $\delta_0(H)=e(H)$ and $\delta_1(H)$ is the minimum vertex
degree.  For an $\ell$-set $S\subseteq V(H)$, define
\[
        N_H(S)
        :=
        \left\{U\in \binom{V(H)\setminus S}{k-\ell}: S\cup U\in E(H)\right\}.
\]
Thus $d_H(S)=|N_H(S)|$.  When $S=\{v\}$, we write $N_H(v)$ and $d_H(v)$
instead of $N_H(\{v\})$ and $d_H(\{v\})$.  The set $N_H(v)$ will also be
viewed as the $(k-1)$-uniform link hypergraph of $v$, with vertex set
$V(H)\setminus \{v\}$.  For $1\le \ell<k$, let $m_\ell(k,n)$ be the smallest
integer $m$ such that every $k$-graph $H$ on $n$ vertices with
$\delta_\ell(H)\ge m$ contains a perfect matching.

For $\ell\ge 1$, the thresholds $m_\ell(k,n)$ are expected to be determined
asymptotically by the maximum of the space barrier and the divisibility
barrier.  H{\`a}n, Person and Schacht~\cite{HPS} conjectured that, for every
$1\le \ell<k$,
\begin{align}\label{conj1}
        m_\ell(k,n)
        \sim
        \max\left\{
        \frac12,\,
        1-\left(1-\frac1k\right)^{k-\ell}
        \right\}
        \binom{n}{k-\ell}.
\end{align}
The codegree case $\ell=k-1$ was determined exactly by R\"odl, Ruci\'nski and
Szemer\'edi~\cite{rodl2009}, improving an earlier bound from~\cite{rodl2006}.
For $k=3$ and $\ell=1$, H{\`a}n, Person and Schacht~\cite{HPS} proved 
$m_1(3,n)\sim \frac59\binom{n}{2}$,
and the exact threshold was later obtained independently by
Khan~\cite{Kh13} and by Kühn, Osthus and Treglown~\cite{KOT13}.  Khan
\cite{kh2016} further determined $m_1(4,n)$ exactly, and Frankl, Lu, Ma and
Wu~\cite{FLWW2026} recently determined $m_1(5,n)$ exactly.  For
$k/2\le \ell<k$, Pikhurko~\cite{pikhurko2008} proved $ m_\ell(k,n)\sim \frac12\binom{n}{k-\ell}$,
with exact versions later obtained by Czygrinow and Kamat~\cite{cz2012} and
by Treglown and Zhao~\cite{treg2013,treg2016}.  Other asymptotic cases of
\eqref{conj1} were established by Alon et al.~\cite{AFH12} for
$k-\ell\le 4$, by Han~\cite{han2016} for $0.42k<\ell<0.5k$, and by Lu and
Yu~\cite{luyu2022} for $0.40k<\ell<0.42k$.  Additional results in this
direction can be found in \cite{kuhn2006,mark2011,treg2016}.

A closely related setting is that of partite hypergraphs.  A $k$-graph $H$
is called \emph{$k$-partite} if its vertex set admits a partition
\[
        V(H)=V_1\cup \cdots\cup V_k
\]
such that every edge contains exactly one vertex from each class.  If
$|V_1|=\cdots=|V_k|=n$, then $H$ is called an \emph{$n$-balanced
$k$-partite $k$-graph}. In the $3$-partite $3$-uniform case, Lo and Markström~\cite{lo2014}
determined the minimum degree threshold for the existence of a perfect
matching.  Further results on perfect matchings in balanced partite
hypergraphs include \cite{Aharoni2009,lwy2019,pikhurko2008}.

This paper studies a spectral variant of the partite problem.  For a graph
$G$, let $A(G)$ be its adjacency matrix, and let $\rho(G)$ denote the largest
eigenvalue of $A(G)$, called the \emph{spectral radius} of $G$.  Spectral
conditions for perfect matchings in hypergraphs have recently been considered
from the viewpoint of vertex adjacency graphs.  Lin, Lu, Yuan
and Zhao~\cite{LLYZ2026} obtained asymptotically tight spectral conditions
for perfect matchings in $3$-graphs.
In this paper, we prove that a  spectral condition on the link graphs of a 3-partite 3-uniform hypergraph ensures the existence of a perfect matching. 
\begin{thm}\label{main}
For every $\gamma>0$, there exists an integer $n_0$ such that the following  holds for all $n\geq n_0$. 
     Let $H$ be an $n$-balanced 3-partite 3-graph. If $\rho(N_H(v))>(\sqrt{2}/2+\gamma)n$ for every $v\in V(H)$, then $H$ contains a perfect matching.
\end{thm}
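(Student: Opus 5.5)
We prove Theorem~\ref{main} with the absorbing method, splitting the argument into an absorbing lemma and an almost perfect matching lemma, and we use the spectral hypothesis only through the structural information it gives about each link graph. For $v\in V_1$ the link $N_H(v)$ is a bipartite graph between $V_2$ and $V_3$, so it has at most $n^2$ edges. The first step translates $\rho(N_H(v))>(\sqrt2/2+\gamma)n$ into combinatorial statements. Because $\rho(G)\le\sqrt{e(G)}$ for bipartite $G$, we get $d_H(v)=e(N_H(v))>(1/2+\gamma')n^2$. This edge-density bound is not the right invariant on its own: the space barrier for the 3-partite problem (Lo--Markstr\"om) needs a degree around $5/9\,n^2$. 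Extremal examples show that the spectral radius can exceed $\sqrt2 n/2$ while the link is unbalanced, for example a complete bipartite $K_{a,b}$ with $ab>n^2/2$. So the second use of the spectral condition is the following. If $\rho(N_H(v))>(\sqrt2/2+\gamma)n$, then by the Perron vector argument the link contains a ``dense core'': sets $A\subseteq V_2$ and $B\subseteq V_3$ with $|A||B|\ge(1/2+\gamma'')n^2$ in which most pairs are edges. This is the analogue of the complete bipartite extremal example.

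The second step is the fractional matching step. We show that every such $H$, and every $H'$ obtained from $H$ by deleting a small set of vertices in a balanced way, has a perfect fractional matching. By LP duality (a Farkas-type argument), if there is no perfect fractional matching then there is a weight function $h$ on $V(H)$ with $h(V(H))<0$ and $h(e)\ge0$ for every edge $e$. We then choose a vertex $v$ minimizing $h$ (or use averaging over a class) and show that the link of $v$ is contained in $\{xy: h(x)+h(y)\ge -h(v)\}$. This is a ``threshold'' bipartite graph, and we bound its spectral radius. Threshold graphs are nested, so their spectral radius is computable, and we aim for a bound of at most $\sqrt2 n/2+o(n)$ whenever $h(V)<0$. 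This contradicts the hypothesis. After that, the standard weak regularity or random-cluster method converts the fractional matching into an almost perfect integral matching covering all but $o(n)$ vertices of each class.

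The third step is absorption. For any balanced triple $T=\{x_1,x_2,x_3\}$ with $x_i\in V_i$, we count absorbing sets. These are collections of edges whose vertex set covers either with or without $T$. The dense-core structure together with $d_H(v)>(1/2+\gamma')n^2$ gives that any two vertices in the same class have $\Omega(n^2)$ common link edges, because the two links have density above $1/2$ inside $V_2\times V_3$. This yields $\Omega(n^{c})$ absorbers for each $T$. A random selection then gives an absorbing matching $M$ of size $o(n)$ that can absorb any balanced set of $o(n)$ vertices. Finally we apply the second step to $H-V(M)$, which still satisfies a slightly weaker spectral condition, since deleting $o(n)$ vertices changes $\rho$ by $o(n)$. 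We absorb the leftover vertices into $M$, which completes the perfect matching.

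The main obstacle is the second step: proving that the threshold bipartite graph forced by a negative dual weighting has spectral radius at most $(\sqrt2/2+o(1))n$. The plan is to reduce to $h$ taking few values, using extreme points of the dual polytope, so that $h\in\{-1,0,1,2\}$-type patterns suffice after scaling. The link is then a union of at most a few complete bipartite blocks, and its $\rho$ can be computed via a small quotient matrix. The optimization should return $\rho\le n/\sqrt2$, with equality at $K_{n/\sqrt2,n/\sqrt2}$-like configurations.
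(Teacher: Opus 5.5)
Your architecture matches the paper's: the degree bound $d_H(v)>(1/2+\gamma')n^2$ from $\rho\le\sqrt{e}$, an absorbing matching, survival of the spectral condition on $H-V(M)$, a perfect fractional matching by LP duality, then an almost perfect matching. Your absorption step is fine; it is Lo--Markstr\"om's absorbing lemma, which needs only $\delta_1(H)\ge(1/2+\gamma)n^2$. The ``dense core'' is unnecessary, and false as stated: a random bipartite graph of density $0.8$ has $\rho\approx 0.8n$ but no nearly complete $A\times B$ with $|A||B|\ge n^2/2$.

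The genuine gap is the step you flag as the main obstacle, and there your target is false. The link of a vertex minimizing $h$ need not have $\rho\le(\sqrt2/2+o(1))n$ when $h(V)<0$. Take $A_1\subseteq V_1$, $A_2\subseteq V_2$ with $|A_1|=n/2-1$, $|A_2|=n/2$, let $H$ consist of all triples meeting $A_1\cup A_2$, and set $h=2/3$ on $A_1\cup A_2$ and $h=-1/3$ elsewhere. Then $h(e)\ge 0$ on every edge and $h(V)=-1$, and every $w\in V_3$ minimizes $h$. But $N_H(w)$ is $V_1\times V_2$ minus $(V_1\setminus A_1)\times(V_2\setminus A_2)$, so $\rho(N_H(w))\approx\frac{1+\sqrt5}{4}n\approx 0.81n$.

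The missing idea is how to pick the vertex. Take an \emph{optimal} dual $g$, i.e.\ a minimum fractional vertex cover, and order the classes so that $g(V_1)\ge g(V_2)\ge g(V_3)$. Complementary slackness with a maximum fractional matching gives $v_0\in V_1$ with $g(v_0)=0$; otherwise every vertex of $V_1$ is saturated and $\nu^*(H)=n$. Then $g$ restricted to $V_2\cup V_3$ covers $N_H(v_0)$ and satisfies both $g(V_2)\ge g(V_3)$ and $2g(V_2)+g(V_3)<n$. The bipartite bound $\rho<(\sqrt2/2+\gamma)n$ is true under these two constraints, not under $g(V)<n$ alone; the example above violates the second one.

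Even with the right constraints, that bipartite bound (the paper's Lemma 3.2) is the real content, and your plan does not establish it. The cover polytope of a 3-partite 3-graph is not integral, and you give no reason why duals for $H$ should take a bounded set of values like $\{-1,0,1,2\}$. The idea can be rescued on the link. The cover polytope of a bipartite graph is integral, so an optimum of $2g(V_2)+g(V_3)$ over it intersected with $\{g(V_2)\ge g(V_3)\}$ lies on an edge of the polytope. Such a point is a convex combination of two $0/1$ covers, with values in $\{0,\lambda,1-\lambda,1\}$. This would be an alternative to the paper's half-duplication, which instead reaches two values per side at a cost of $O(n/\ln n)$ in $\rho$. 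But the resulting block optimization still has to be done, and your guessed extremizer is infeasible: $K_{tn,tn}$ admits such a cover only for $t<2/3$. The extremal links are $K_{n,n/2}$-type, as in the tightness construction.

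Lastly, a single perfect fractional matching does not yield an almost perfect matching. Disjoint copies of the truncated Fano plane are 3-partite and have perfect fractional matchings, but $\nu=n/2$. You need one of two routes:
\begin{itemize}
\item apply the fractional lemma to a regularity-reduced hypergraph, after checking that the spectral condition passes to cluster links (it does: for a typical $v$, all but a small fraction of $n^2$ edges of $N_H(v)$ lie in the blow-up of its cluster's link, and $\rho(F)\le\sqrt{e(F)}$); or
\item as the paper does, build $n'/\ln n'$ edge-disjoint perfect fractional matchings with bounded pair weights, take a random sparsification, and apply the Frankl--R\"odl nibble.
\end{itemize}
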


The bound in Theorem~\ref{main} is asymptotically tight. To see this, let $V_1,V_2,V_3$ be the three pairwise disjoint sets of size
$n$. Choose subsets $A_1\subseteq V_1$ and $A_2\subseteq V_2$ such that  
\[
|A_1| = |A_2| = \left\lfloor \frac{n-1}{2} \right\rfloor.
\]
When $n$ is odd, define a $3$-partite $3$-graph $H_{\mathrm{odd}}$ on
$V_1\cup V_2\cup V_3$ by
\[
        E(H_{\mathrm{odd}})
        =
        \bigl\{\{x,y,z\}\in \prod_{i\in [3]}V_i: 
        x\in A_1 \text{ or } y\in A_2\bigr\}.
\]
Every matching in $H_{\mathrm{odd}}$ has size at most
$|A_1|+|A_2|=n-1$, and hence $H_{\mathrm{odd}}$ has no perfect matching.
Moreover,
\[
        \min_{v\in V(H_{\mathrm{odd}})}
        \rho(N_{H_{\mathrm{odd}}}(v))
        =
        \sqrt{\frac{n(n-1)}{2}}.
\]
For even $n$, fix a vertex $u\in V_3$ and define $H_{\mathrm{even}}$ by
\[
        E(H_{\mathrm{even}})
        =
        \bigl\{\{x,y,z\}: 
        x\in A_1 \text{ or } y\in A_2 \text{ or } z=u\bigr\}.
\]
Again $H_{\mathrm{even}}$ has no perfect matching.  A direct computation of
the spectral radii of the link graphs gives
\[
\min_{v\in V(H_{\mathrm{even}})}
\rho(N_{H_{\mathrm{even}}}(v))
=
\frac12
\sqrt{
n^2-n+2+
\sqrt{n^4-6n^3+9n^2+12n-12}
}.
\]
These constructions suggest the following exact form of the threshold.
\begin{conj}
Let $H$ be a $3$-partite $3$-graph with vertex classes of size $n$.  Define
\[
\tau(n)=
\begin{cases}
\displaystyle
\sqrt{\frac{n(n-1)}{2}},
& \text{if $n$ is odd},\\[12pt]
\displaystyle
\frac12
\sqrt{
n^2-n+2+
\sqrt{n^4-6n^3+9n^2+12n-12}
},
& \text{if $n$ is even}.
\end{cases}
\]
If
\[
        \rho(N_H(v))>\tau(n)
\]
for every vertex $v\in V(H)$, then $H$ contains a perfect matching.
\end{conj}

The remainder of the paper is organized as follows.  Section~2 collects the
notation and preliminary tools used throughout the proof.  Section~3 proves
the fractional matching statement that serves as the main technical input.
Section~4 completes the proof of Theorem~\ref{main}.

\section{Preliminaries}
In this section we collect the notation and auxiliary results used in the
proof. The statement ‘\(a \ll b\)’ means that there exists a non‑increasing function \(f\) such that whenever \(a < f(b)\) the subsequent claims hold.  We  record a standard spectral estimate for bipartite graphs.

\begin{lemma}[Bhattacharya, Friedland and Peled, \cite{bh2008}]\label{lem:bipartite-spectral}
Let $G$ be a bipartite graph with $m$ edges. Then
\[
\rho(G)\le \sqrt{m}.
\]
\end{lemma}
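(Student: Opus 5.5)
The statement is the Bhattacharya–Friedland–Peled bound $\rho(G)\le\sqrt{m}$ for a bipartite graph with $m$ edges. I would prove it with the standard Rayleigh-quotient argument. The plan has three steps:
\begin{enumerate}
\item[(i)] restrict to a nonnegative principal eigenvector and rewrite $\rho^2$ as a sum over edges;
\item[(ii)] split that sum along the bipartition $X\cup Y$ and apply Cauchy--Schwarz;
\item[(iii)] use the eigenvector equations to bound the resulting cross term by $\rho^2$.
\end{enumerate}

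Let $A=A(G)$. By Perron--Frobenius there is a unit vector $x\ge 0$ with $Ax=\rho x$, where $\rho=\rho(G)$. Then
\[
\rho = x^{T}Ax = 2\sum_{uv\in E(G)} x_u x_v .
\]
Write the parts as $X,Y$ and set $a=\sum_{u\in X}x_u^2$ and $b=\sum_{v\in Y}x_v^2$, so $a+b=1$. If $\rho>0$, the eigenvalue equations give
\[
\rho\, a=\sum_{u\in X} x_u (Ax)_u=\sum_{uv\in E}x_ux_v=\rho\, b .
\]
Hence $a=b=1/2$, and $\rho=2\sum_{uv\in E}x_ux_v$.

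Now apply Cauchy--Schwarz over the $m$ edges, orienting each edge $uv$ with $u\in X$, $v\in Y$:
\[
\Bigl(\sum_{uv\in E}x_ux_v\Bigr)^2\le m\sum_{uv\in E}x_u^2x_v^2 .
\]
It remains to show $\sum_{uv\in E}x_u^2x_v^2\le \tfrac14\cdot\tfrac{\rho^2}{m}\cdot m$, and this needs care, because the naive bound goes the wrong way. A cleaner route is Cauchy--Schwarz applied vertex-wise, giving
\[
\rho x_u=\sum_{v\sim u}x_v \quad\Longrightarrow\quad \rho^2x_u^2\le d_u\sum_{v\sim u}x_v^2 .
\]
Summing over $u\in X$ yields $\rho^2/2\le \sum_{u\in X} d_u\sum_{v\sim u}x_v^2$. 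This is still not directly $m$ times something, so I would use the known trick instead:
\[
\rho^2=\sum_{u\in X}(\rho x_u)^2\cdot 2 .
\]
Then, pairing, $\rho^2 = 2\sum_{u\in X}\bigl(\sum_{v\sim u}x_v\bigr)^2$, and we bound
\[
\Bigl(\sum_{uv\in E}x_ux_v\Bigr)^2\le \Bigl(\sum_{uv\in E}x_u^2\Bigr)\Bigl(\sum_{uv\in E}x_v^2\Bigr)=\Bigl(\sum_{u\in X}d_ux_u^2\Bigr)\Bigl(\sum_{v\in Y}d_vx_v^2\Bigr).
\]

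The \textbf{main obstacle} is controlling these degree-weighted sums, since degrees can be large. This is where the actual Bhattacharya--Friedland--Peled argument enters, and it is the step I would work on. The first thing I would try is a different formulation altogether. For bipartite $G$ with biadjacency matrix $B$, we have $\rho^2=\lambda_{\max}(BB^T)$. Since $BB^T$ is positive semidefinite,
\[
\lambda_{\max}(BB^T)\le\operatorname{tr}(BB^T)=\sum_{u,v}B_{uv}^2=m,
\]
which gives $\rho\le\sqrt m$ immediately. In fact the two nonzero eigenvalues $\pm\rho$ of $A$ together already give $2\rho^2\le\operatorname{tr}(A^2)=2m$. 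This trace argument is short and is what I would write up, discarding the edge-wise Cauchy--Schwarz attempt above.
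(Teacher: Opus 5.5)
Your final trace argument is correct and complete. Write $A(G)=\begin{pmatrix}0&B\\ B^{\top}&0\end{pmatrix}$. Then $A^2$ is block diagonal with blocks $BB^{\top}$ and $B^{\top}B$, which share their nonzero eigenvalues, so $\rho(G)^2=\lambda_{\max}(BB^{\top})$. Since $BB^{\top}$ is positive semidefinite, $\lambda_{\max}(BB^{\top})\le\operatorname{tr}(BB^{\top})=\sum_{u,v}B_{uv}^2=m$.

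The paper gives no proof of this lemma. It imports it from Bhattacharya, Friedland and Peled, whose paper addresses the finer problem of maximizing the spectral radius given both $m$ and the part sizes. Your argument is a self-contained two-line substitute that covers every use in the paper: the link graphs $N_H(v)$, the graph $F$ in the proof of Lemma~\ref{incre}, and the graph in \eqref{edgesss} are all bipartite. It also gives the equality case for free: equality forces $BB^{\top}$ to have rank one, i.e.\ $G$ is complete bipartite plus isolated vertices.

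In the write-up, drop the exploratory middle part. Two remarks on that part. First, the $\pm\rho$ variant needs $\rho>0$ so that $\rho$ and $-\rho$ are distinct eigenvalues; the case $\rho=0$ is trivial. Second, the edge-wise Cauchy--Schwarz route you abandoned does close, because you misidentified the target. From $\rho/2=\sum_{uv\in E}x_ux_v$ and $\bigl(\sum_{uv\in E}x_ux_v\bigr)^2\le m\sum_{uv\in E}x_u^2x_v^2$, what you need is $\sum_{uv\in E}x_u^2x_v^2\le 1/4$, not a bound of order $\rho^2$. This holds because $\sum_{uv\in E}x_u^2x_v^2\le\sum_{u\in X,\,v\in Y}x_u^2x_v^2=ab\le 1/4$.
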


We shall also use the following absorption lemma of Lo and Markstr\"om~\cite{lo2014}.
For a $k$-partite $k$-graph with vertex classes $V_1,\dots,V_k$, a set
$S\subseteq V(H)$ is called \emph{legal} if $|S\cap V_i|\le 1$ for every
$i\in[k]$.  The minimum $\ell$-degree $\delta_{\ell}(H)$ of a $k$-partite $k$-graph is taken
over all legal $\ell$-sets.
\begin{lemma}[Lo and Markstr\"om,~\cite{lo2014}]\label{absorb}
Let  $1 \leq \ell<k$, $0<\gamma< 1 /\left(10 k^{3}\right)$ and $\gamma^{\prime}=\gamma^{2 k-1} / 20$.
Then there is an integer $n_{0}$ such that for all $n>n_{0}$ the following holds:
Suppose $H$ is a $k$-partite $k$-graph with $n$ vertices in each class and minimum $\ell$-degree
$\delta_{\ell}(H) \geq(1 / 2+\gamma) n^{k-\ell}$.
Then there exists a matching $M$ in $H$ of size $|M| \leq(k-1) \gamma^{k} n$ such that, for every balanced set $W$ of size $|W| \leq k \gamma^{\prime} n$, there exists a matching covering exactly the vertices of $V(M) \cup W$.
\end{lemma}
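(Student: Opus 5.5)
The plan is the standard absorbing method of R\"odl, Ruci\'nski and Szemer\'edi, adapted to the $k$-partite setting. Since the statement is quantitative, the real work is in the counting and the probabilistic selection. Throughout, a \emph{legal $k$-set} means a set $S=\{s_1,\dots,s_k\}$ with $s_i\in V_i$.

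\textbf{Step 1 (absorbers for a legal $k$-set).} For a legal $k$-set $S$, call a $k^2$-set $T\subseteq V(H)\setminus S$ an \emph{$S$-absorber} if both $H[T]$ and $H[T\cup S]$ have perfect matchings. I will build absorbers as follows.
\begin{itemize}
\item Choose an edge $e=\{u_1,\dots,u_k\}$ of $H$, disjoint from $S$, with $u_i\in V_i$.
\item For each $i$, choose a legal $(k-1)$-set $W_i$ avoiding $V_i$ such that both $W_i\cup\{u_i\}$ and $W_i\cup\{s_i\}$ are edges.
\item Make all these sets pairwise disjoint, and set $T=e\cup\bigcup_i W_i$.
\end{itemize}
Then $\{W_i\cup\{u_i\}\}_{i}$ is a perfect matching of $T$. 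Also $e$ together with $\{W_i\cup\{s_i\}\}_i$ is a perfect matching of $T\cup S$. Note that $|T|=k+k(k-1)=k^2$, and $T$ is balanced.

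\textbf{Step 2 (counting absorbers).} Averaging the $\ell$-degree condition over legal $\ell$-sets containing a fixed vertex gives $\delta_1(H)\ge(1/2+\gamma)n^{k-1}$. Fix $i$. The links of $u_i$ and $s_i$ are both subsets of the $n^{k-1}$ legal $(k-1)$-sets avoiding $V_i$, so by inclusion--exclusion they share at least $2\gamma n^{k-1}$ common legal $(k-1)$-sets.

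There are at least $(1/2+\gamma)n^{k}/k$ choices of $e$. After discarding the $O(n^{k-2})$ sets $W_i$ that meet earlier choices, each $W_i$ has at least $\gamma n^{k-1}$ choices. Hence every legal $S$ has at least $c\,n^{k^2}$ absorbers, where $c$ is roughly $\gamma^{k}/k^{O(1)}$. This constant is where the exponent $\gamma^{2k-1}$ in $\gamma'$ should come from, after the losses in Step 3.

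\textbf{Step 3 (random selection).} Select each balanced $k^2$-set independently with probability $p=\gamma^{k}n^{1-k^2}/k^{O(1)}$, calibrated so that:
\begin{itemize}
\item by Chernoff, with high probability the selected family $\mathcal F$ has $|\mathcal F|\le 2p\binom{n}{k}^{k}\lesssim \gamma^k n$;
\item by Chernoff and a union bound over the $n^k$ legal $S$, every legal $S$ has at least $pcn^{k^2}/2$ absorbers in $\mathcal F$;
\item by Markov, the number of intersecting pairs in $\mathcal F$ is at most a small fraction of $pcn^{k^2}$.
\end{itemize}
Delete one member of each intersecting pair, together with every member that is not an absorber of any legal set. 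This leaves a disjoint family $\mathcal F'$ in which each legal $S$ still has at least $\gamma' n$ absorbers. Let $M$ be the union of the perfect matchings of the sets $T\in\mathcal F'$. Then $|M|=k|\mathcal F'|\le(k-1)\gamma^k n$ after tuning constants.

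\textbf{Step 4 (absorbing $W$).} Given a balanced $W$ with $|W|\le k\gamma' n$, split it arbitrarily into at most $\gamma' n$ legal $k$-sets $S_1,\dots,S_t$. Each $S_j$ has at least $\gamma' n$ absorbers in $\mathcal F'$, so the $S_j$ can be assigned greedily to distinct absorbers $T_j$. For each $j$, replace the matching of $T_j$ by a perfect matching of $T_j\cup S_j$, and keep the matchings of all other members of $\mathcal F'$. The result is a matching covering exactly $V(M)\cup W$.

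The main obstacle is the bookkeeping in Steps 2--3: the absorber density $c$ must survive the Markov deletion. This requires $p$ to be chosen so that the expected number of overlapping pairs, about $p^2 n^{2k^2-1}$, is at most a quarter of $pcn^{k^2}$. Getting this right is exactly what fixes the relation $\gamma'=\gamma^{2k-1}/20$ and the hypothesis $\gamma<1/(10k^3)$.
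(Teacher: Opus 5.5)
\emph{The paper gives no proof of this lemma; it is quoted from Lo and Markstr\"om.} Your scheme is the standard one: absorbers, random selection, Chernoff and Markov, greedy absorption. Your $k^2$-vertex gadget is also valid. The gap is quantitative: with that gadget the counting cannot produce the stated constants, so your claim that Steps 2--3 yield $\gamma'=\gamma^{2k-1}/20$ is wrong.

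Your gadget needs $k$ independent common-link choices. Hence the only guaranteed density of $S$-absorbers among the $\binom{n}{k}^k$ balanced $k^2$-sets is about $\frac12(2\gamma)^k=2^{k-1}\gamma^k$. Each absorber contributes $k$ edges to $M$, so the bound $|M|\le(k-1)\gamma^k n$ forces $|\mathcal F'|\le\frac{k-1}{k}\gamma^k n$. Nothing in your argument guarantees that many selected sets get discarded, so you must take $\mathbb E|\mathcal F|=O(\gamma^k n)$.

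Then each legal $S$ is guaranteed only $O_k(\gamma^{2k})\,n$ absorbers. Your own parameter choices give exactly this: $pcn^{k^2}\approx\gamma^{2k}n/k^{O(1)}$. For small $\gamma$ this is far below the $\gamma' n=\gamma^{2k-1}n/20$ absorbers that Step 4 needs, and deletions only lower it.

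For the same reason, the hypothesis $\gamma<1/(10k^3)$ does not do the job you assign to it. The expected number of intersecting pairs, $p^2n^{2k^2-1}\approx\gamma^{2k}n/k^{O(1)}$, has the same order in $\gamma$ as the absorber count. So there is no spare factor of $\gamma$ for $k^3\gamma<1/10$ to act on. What your argument proves is the lemma with $\gamma'$ replaced by something like $c_k\gamma^{2k}$. That weaker form is all Section~4 of the paper actually uses, but it is not the statement as written.

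The missing idea is a smaller gadget that uses an edge through $s_1$ itself.
\begin{itemize}
\item Take an edge $e=\{s_1,u_2,\dots,u_k\}$ avoiding $S\setminus\{s_1\}$.
\item For $2\le i\le k$, take a legal $(k-1)$-set $W_i$ in the common link of $u_i$ and $s_i$, with everything disjoint.
\item Set $A=\{u_2,\dots,u_k\}\cup W_2\cup\dots\cup W_k$, a balanced $k(k-1)$-set.
\end{itemize}
Then $\{W_i\cup\{u_i\}\}_{i\ge 2}$ is a perfect matching of $H[A]$ with $k-1$ edges. Also $\{e\}\cup\{W_i\cup\{s_i\}\}_{i\ge 2}$ is a perfect matching of $H[A\cup S]$.

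Only $k-1$ common-link choices occur. So the density of such $A$ among the $\binom{n}{k-1}^k$ balanced $k(k-1)$-sets is at least roughly $\frac12(2\gamma)^{k-1}\ge\gamma^{k-1}$, a full factor $\gamma^{-1}$ better than yours.

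Now select each balanced $k(k-1)$-set with probability $p$, where $p\binom{n}{k-1}^k=\gamma^k n/2$.
\begin{itemize}
\item With high probability $|M|\le(k-1)\gamma^k n$.
\item With high probability every legal $S$ has at least about $\gamma^{2k-1}n/4$ absorbers.
\item The expected number of intersecting pairs is at most $k^3\gamma^{2k}n/8<\gamma^{2k-1}n/80$. This is exactly where $k^3\gamma<1/10$ is used.
\end{itemize}
After Markov and deletion, at least $\gamma^{2k-1}n/20$ absorbers survive for every $S$. This is what the constants in the statement encode: the factor $k-1$ in $|M|$, and the exponent $2k-1=k+(k-1)$ in $\gamma'$. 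With this gadget your Steps 3 and 4 go through essentially verbatim.
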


Let $H$ be a hypergraph.  A \emph{fractional matching} in $H$ is a function
$f:E(H)\to[0,1]$ such that $\sum_{e\ni v}f(e)\le 1$ for every
$v\in V(H)$.  Its size is denoted by $f(E(H))$.  The maximum size of a
fractional matching in $H$ is denoted by $\nu^*(H)$.  A fractional matching
$f$ is called \emph{maximum} if $f(E(H))=\nu^*(H)$.  If
$\sum_{e\ni v}f(e)=1$ for every $v\in V(H)$, then $f$ is called a
\emph{perfect fractional matching}.

A \emph{fractional vertex cover} of $H$ is a function
$g:V(H)\to[0,1]$ such that $g(e)\ge 1$ for every $e\in E(H)$.  Its size is
$g(V(H))$, and the minimum size of a fractional vertex cover of $H$ is
denoted by $\tau^*(H)$.  By linear programming duality,
$\tau^*(H)=\nu^*(H)$.  We shall use the following complementary slackness
conditions.
  \begin{prop}\label{comp}
Let $H$ be a hypergraph.  Let $f$ be a maximum fractional matching in $H$,
and let $g$ be a minimum fractional vertex cover of $H$.  Then the following
statements hold.
 \begin{itemize}
    \item[(i)] If $g(v)>0$ for some $v\in V(H)$, then
    $\sum_{e\ni v}f(e)=1$.
    \item[(ii)] If $f(e)>0$ for some $e\in E(H)$, then $\sum_{v\in e}g(v)=1$.
\end{itemize}
  \end{prop}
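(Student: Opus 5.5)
The plan is the standard complementary slackness argument: sandwich $\nu^*(H)$ and $\tau^*(H)$ in a chain of termwise inequalities, then use LP duality $\tau^*(H)=\nu^*(H)$ to force every term to be tight. Let $f$ be a maximum fractional matching and $g$ a minimum fractional vertex cover. I will double count the quantity $\sum_{e\in E(H)} f(e)\,g(e)$, where $g(e)=\sum_{v\in e}g(v)$, in two ways.

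First, since $g$ is a fractional vertex cover we have $g(e)\ge 1$ for every edge, and $f(e)\ge 0$, so
\[
\nu^*(H)=\sum_{e\in E(H)} f(e)\le \sum_{e\in E(H)} f(e)\,g(e).
\]
Next, exchanging the order of summation gives
\[
\sum_{e\in E(H)} f(e)\,g(e)=\sum_{v\in V(H)} g(v)\sum_{e\ni v} f(e).
\]
Since $f$ is a fractional matching, $\sum_{e\ni v}f(e)\le 1$, and since $g(v)\ge 0$, this is at most $\sum_{v} g(v)=\tau^*(H)$.

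By LP duality the two ends of this chain are equal, so both inequalities hold with equality. Write each difference as a sum of nonnegative terms:
\[
\sum_{e} f(e)\bigl(g(e)-1\bigr)=0,\qquad \sum_{v} g(v)\Bigl(1-\sum_{e\ni v}f(e)\Bigr)=0.
\]
Each summand is a product of two nonnegative factors, so every summand vanishes.

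Both items then follow directly. If $f(e)>0$, the vanishing of $f(e)(g(e)-1)$ gives $g(e)=1$, which is item (ii). If $g(v)>0$, the vanishing of $g(v)\bigl(1-\sum_{e\ni v}f(e)\bigr)$ gives $\sum_{e\ni v}f(e)=1$, which is item (i).

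There is no real obstacle. The only substantive input is strong duality $\tau^*(H)=\nu^*(H)$, which the paper takes as given. The rest is the nonnegativity bookkeeping above, which needs $f\ge 0$ and $g\ge 0$, both of which hold by definition.
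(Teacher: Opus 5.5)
Your proof is correct. It is the standard derivation of complementary slackness from strong duality $\tau^*(H)=\nu^*(H)$, and that is exactly what the paper relies on: it states the proposition without proof, as an immediate consequence of LP duality.
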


  The following theorem of Aharoni, Holzman and Jiang~\cite{AHJ} will be used in our
fractional matching argument.

\begin{thm}[Aharoni, Holzman, and Jiang,~\cite{AHJ}]\label{rainfracmat}
    Let $r\ge 2$ be an integer, and let $n$ be a positive rational number. Let
    $H_1,\dots,H_{\lceil rn\rceil}$ be $r$-graphs such that $\nu^*(H_i)\ge n$ for
    $i=1,\dots,\lceil rn\rceil$. Then there exist $e_1\in H_1,\dots,e_{\lceil rn\rceil}\in
    H_{\lceil rn\rceil}$ such that $\left\{e_1,\dots,e_{\lceil rn\rceil}\right\}$ has a fractional
    matching of size $n$.
\end{thm}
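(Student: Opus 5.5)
Since Theorem~\ref{rainfracmat} is quoted from~\cite{AHJ}, the paper will use it without proof; still, here is how I would prove it. The approach is topological: a colourful KKM/Sperner-type argument combined with LP duality. Write $m=\lceil rn\rceil$ and, for an edge $e$, let $\mathbf 1_e\in\mathbb R^{V}$ be its indicator vector. For any family $F$ of edges, duality gives $\nu^*(F)\ge n$ if and only if $\tau^*(F)\ge n$. So the goal is a rainbow choice $e_1,\dots,e_m$ such that no vertex weighting $g\ge 0$ with $g(V)<n$ satisfies $g(e_j)\ge 1$ for all $j$.

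Suppose, for a contradiction, that every rainbow choice admits a fractional cover of size $<n$. I would work on the space of weightings $\Delta=\{g:V\to\mathbb R_{\ge 0},\ g(V)=n\}$, or a suitable simplex of weightings paired with an index set of size $m$. For each $i$, let $C_i=\{g\in\Delta: g(e)\le 1 \text{ for some } e\in H_i\}$, the weightings that ``fail to cover'' $H_i$ with slack. The hypothesis $\nu^*(H_i)\ge n$ means $\tau^*(H_i)\ge n$, so any $g$ of total weight $n$ (after a small perturbation) leaves some edge of $H_i$ weakly uncovered. Hence each $C_i$ is all of $\Delta$, and in particular each $C_i$ satisfies the KKM boundary condition.

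I would then apply a colourful KKM theorem (Gale) or the KKMS theorem to the family $(C_i)$, indexed by a simplex whose dimension is tied to $m$. The aim is a point $g$ and a ``balanced'' collection of indices, one edge $e_i\in H_i$ for each $i$, with $g(e_i)\le 1$ for all $i$. The balancing weights from KKMS would serve as the fractional matching on $\{e_1,\dots,e_m\}$: each vertex is saturated at most once because of the simplex normalisation, and the total weight equals $g$-mass divided by $r$ times the number of colours, i.e.\ $\lceil rn\rceil/r\ge n$. Each edge has size $r$, so $r$ times the matching size is at most $\sum_v(\text{load})$. This double count is exactly where the count $\lceil rn\rceil$ enters, and it yields a fractional matching of size $n$, contradicting the assumption. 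Alternatively, the same mechanism can be phrased as a colourful Carath\'eodory statement: the point $\frac1n\mathbf 1_V$-type target lies in $\mathrm{conv}\{\mathbf 1_e:e\in H_i\}+\{\text{nonpositive cone}\}$ for every $i$, and one seeks a rainbow set whose hull meets the same region.

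The main obstacle is the middle step: setting up the covering sets so that the KKMS-balanced collection really converts into a fractional matching of size exactly $n$ using only $\lceil rn\rceil$ colours, rather than the $|V|$ or $|V|+1$ colours a naive Carath\'eodory/dimension count would demand. I would first try to encode the constraints in a polytope whose relevant dimension is governed by the single linear functional ``total weight'', and use the $r$-uniformity to bound the load. If that is not sharp, I would fall back to an induction on $\lceil rn\rceil$: pick an edge greedily from $H_m$ and use LP duality on the remaining colours. Rationality of $n$ and degenerate ties, where $g(e)=1$ exactly, would be handled by perturbation and compactness.
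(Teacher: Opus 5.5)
The paper does not prove this statement; it is imported from \cite{AHJ}. Your argument therefore has to stand on its own, and it does not: the step carrying all the content is missing, and the surrounding pieces do not fit together.

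Your sets $C_i=\{g\in\Delta: g(e)\le 1\text{ for some }e\in H_i\}$ are, as you note yourself, all equal to $\Delta$. They are indexed by the $m=\lceil rn\rceil$ colours, while $\Delta$ is a simplex indexed by $V$, so they do not even form a KKM cover of $\Delta$. In any case, a cover by copies of the whole space carries no information, and your contradiction hypothesis (every rainbow choice has a fractional cover of size $<n$) is never used in building it. More fundamentally, the output you aim for is one point $g$ with $g(V)=n$ and edges $e_i\in H_i$ with $g(e_i)\le 1$. Such a point exists for \emph{every} $g$, so it certifies nothing. By duality, $\nu^*(\{e_1,\dots,e_m\})\ge n$ says that every $g\ge 0$ with $g(V)<n$ fails to cover some $e_i$; that is a universal statement over $g$, and no single point can witness it. The conversion ``balancing weights $\Rightarrow$ fractional matching of size $\lceil rn\rceil/r$'' also has no derivation. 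A KKMS balanced collection consists of subsets of the simplex's index set (vertices of $H$, or colours), not of edges drawn from distinct $H_i$. Its total weight is dictated by that index set (e.g.\ $|V|/r$ for $r$-sets balanced on $V$), not by $\lceil rn\rceil$. Finally, the fallback induction has no working invariant. After committing to an edge and deleting its $r$ vertices, $\nu^*$ of the other families can drop by $r$, not $1$, and an optimal fractional matching need not contain an edge of weight $1$ at all.

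The missing idea is a mechanism that cuts the number of colours from the dimension count (about $|V|$, as in colourful Carath\'eodory) down to $\lceil rn\rceil$. One route that works is the following.

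\emph{Reduction.} Let $X$ be the complex of edge sets $F$ (colour-copies kept distinct) with $\nu^*(F)<n$, and show that $X$ is $(\lceil rn\rceil-1)$-Leray. The Kalai--Meshulam topological colourful Helly theorem with $\lceil rn\rceil$ colour classes then says: if every rainbow transversal lay in $X$, some $H_i$ would lie in $X$, i.e.\ $\nu^*(H_i)<n$, a contradiction.

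\emph{Leray property.} Choose $\epsilon>0$ so that $\nu^*(F)<n$ iff $\tau^*(F)\le n-\epsilon$. By the nerve theorem, $X[W]$ is homotopy equivalent to $\bigcup_{e\in W}\{y\ge 0: y(e)\ge 1,\ \sum_v y_v\le n-\epsilon\}$. This union retracts radially onto $T\setminus C$, where $T=\{y\ge 0:\sum_v y_v=n-\epsilon\}$ is a simplex and $C=\{y\in T: y(e)<1\ \forall e\in W\}$ is convex. Hence it is homotopy equivalent to the subcomplex of faces $T_S$ of $T$ that miss $C$. By LP duality, these are exactly the sets $S$ that can be fractionally covered by edges of $W$ with total weight at most $n-\epsilon$. $r$-uniformity gives $|S|\le r(n-\epsilon)<rn$, so this complex has dimension at most $\lceil rn\rceil-2$ and no homology in degrees $\ge\lceil rn\rceil-1$.

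That last inequality is where $r$-uniformity and the count $\lceil rn\rceil$ actually enter. Your ``double count'' gestures at it but never attaches it to an object for which it proves anything.
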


The next result is the Frankl–R\"odl ``nibble'' lemma, which we use to obtain almost perfect
matchings in a $k$-graph under appropriate degree and 2-degree conditions.

\begin{thm}[Frankl and R\"odl,~\cite{FR85}]\label{nibble}
    For every integer $k\ge 2$ and any real $\sigma >0$, there exist $\tau = \tau(k,\sigma)$ and
    $d_0 = d_0(k,\sigma)$ such that for every $n\ge D\ge d_0$ the following holds:
    Every $n$-vertex $k$-graph $H$ with $(1-\tau)D<d_H(v)< (1+\tau)D$ for any $v\in V(H)$ and
    $d_{H}(S)<\tau D$ for any $S\in {V(H)\choose 2}$ contains a matching covering all but at most $\sigma n$ vertices.
\end{thm}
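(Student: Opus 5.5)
Theorem~\ref{nibble} is quoted from~\cite{FR85} and is not proved in the paper. If we had to prove it, we would use the standard R\"odl nibble: a semi-random greedy process run in a bounded number of rounds. Fix small $\varepsilon=\varepsilon(k,\sigma)>0$ and a number of rounds $t$ with $e^{-\varepsilon t}<\sigma/2$. Then choose $\tau$ very small in terms of $\varepsilon,t,k$, and $d_0$ large.

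\textbf{One round.} Start from the current $k$-graph $H_i$ on vertex set $V_i$. Suppose every vertex has degree $(1\pm\tau_i)D_i$ and every pair has codegree at most $\tau_i D_i$.
\begin{itemize}
\item Select each edge independently with probability $\varepsilon/D_i$.
\item Put into the matching every selected edge that meets no other selected edge.
\item Delete all vertices lying in selected edges, and let $H_{i+1}$ be the induced subhypergraph on the remaining vertices.
\end{itemize}
The expected behaviour, using $d(v)\approx D_i$ and small codegrees, is as follows.
\begin{itemize}
\item A vertex $v$ is deleted with probability $1-(1-\varepsilon/D_i)^{d(v)}\approx 1-e^{-\varepsilon}$.
\item Conditioned on $v$ surviving, the other $k-1$ vertices of an edge $e\ni v$ survive almost independently. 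The small codegree makes the overlap between the neighbourhoods of different vertices negligible.
\item Hence $\mathbb{E}[d_{H_{i+1}}(v)]\approx D_i e^{-\varepsilon(k-1)}=:D_{i+1}$.
\item $\mathbb{E}|V_{i+1}|\approx e^{-\varepsilon}|V_i|$.
\item Selected edges that meet other selected edges waste only $O(\varepsilon^2)|V_i|$ vertices.
\end{itemize}

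\textbf{Concentration.} We need to show that, with positive probability, $d_{H_{i+1}}(v)=(1\pm\tau_{i+1})D_{i+1}$ for all surviving $v$ simultaneously, with $\tau_{i+1}=O(\tau_i)+D_i^{-c}$. The new degree of $v$ depends only on the choices of edges at distance at most $2$ from $v$. Changing one edge choice alters $d_{H_{i+1}}(v)$ by at most a codegree-controlled amount. So Talagrand's inequality, or the bounded-differences inequality applied in the local neighbourhood, gives deviation probability $\exp(-D_i^{c})$.

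Since the number of vertices can be much larger than $D$, we would not take a union bound over all vertices. Instead we would use the Lov\'asz Local Lemma, since each bad event depends on only $\mathrm{poly}(D_i)$ others. The pair codegrees only decrease, while $D_{i+1}$ shrinks by the constant factor $e^{-\varepsilon(k-1)}$. Hence the codegree condition persists with $\tau_{i+1}=O(\tau_i)$. The number of wasted vertices in the round is also concentrated near $O(\varepsilon^2)|V_i|$.

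\textbf{Iteration and the main obstacle.} We iterate for $t$ rounds. Because $t$ is a constant, the errors grow to at most $C^t\tau+o(1)$, which stays below any fixed threshold once $\tau\ll \varepsilon,1/t$ and $d_0$ is large. The uncovered vertices at the end are the final survivors, about $e^{-\varepsilon t}n<\sigma n/2$, plus the accumulated waste, about $t\varepsilon^2 n\le \varepsilon n\log(2/\sigma)$. Choosing $\varepsilon$ small makes the total at most $\sigma n$.

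The main obstacle is the concentration step. We must control the dependence between the survival of different neighbours of $v$ and keep the error terms from compounding faster than geometrically. This is precisely where the codegree hypothesis $d_H(S)<\tau D$ is essential, and where we would invest the most care, first trying the Local Lemma combined with Talagrand's inequality.
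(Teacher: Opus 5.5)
The paper does not prove this theorem; it only cites \cite{FR85}. Your sketch therefore has to stand on its own, and it breaks exactly at the concentration step you flag.

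\textbf{Why the concentration claim fails.} In this statement codegrees are bounded only by $\tau D$ with $\tau$ a fixed constant, not by something like $D^{1-c}$. Changing the choice of a single edge $f$ can change $d_{H_{i+1}}(v)$ by up to $\sum_{u\in f}d(u,v)\le k\tau_iD_i$, which is a constant fraction of $D_i$. This is not an artefact of the inequality you would use.

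\begin{itemize}
\item The hypotheses allow the link of $v$ to be carried by about $1/\tau$ vertices $u_j$ with $d(u_j,v)\approx\tau D$. For $k=3$, take the link to be a union of $1/\tau$ stars centred at the $u_j$.
\item Then $d_{H_{i+1}}(v)$ is essentially $\tau D_ie^{-\varepsilon}$ times the number of surviving $u_j$, so it fluctuates on the scale $\sqrt{\tau}D_i$, not $D_i^{1-c}$.
\item Moreover, by independence and Harris's inequality, with probability at least some $c(\tau,\varepsilon)>0$ independent of $D$, the vertex $v$ survives while every $u_j$ is deleted by an edge avoiding $v$. In that event $d_{H_{i+1}}(v)=0$.
\end{itemize}

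Hence there is no $\exp(-D_i^{c})$ tail. The Local Lemma is also unusable: the bad events have probability bounded below independently of $D$, while each depends on $\mathrm{poly}(D)$ others and $D\ge d_0$ is unbounded. Indeed, for suitable $H$ with $n\gg D$, some surviving vertex is bad with high probability. So the invariant ``every surviving vertex has degree $(1\pm\tau_{i+1})D_{i+1}$, with $\tau_{i+1}=O(\tau_i)+D_i^{-c}$'' simply cannot be maintained.

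\textbf{The missing idea (Pippenger's).} Do not try to keep every vertex good. Carry instead the weaker invariant:
\begin{itemize}
\item all but $\gamma_i|V_i|$ vertices have degree $(1\pm\gamma_i)D_i$;
\item every degree is below $rD_i$;
\item every codegree is below $\gamma_iD_i$.
\end{itemize}
The last two conditions persist, with a constant-factor loss per round, because degrees and codegrees only drop.

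The codegree hypothesis then enters through second moments:
\begin{itemize}
\item Two vertices have correlated survival only through the at most $\gamma_iD_i$ edges containing both. Those covariances are therefore $O(\varepsilon\gamma_i)$, giving $\mathrm{Var}\,|V_{i+1}|=O(|V_i|+\varepsilon\gamma_i|V_i|^2)$.
\item Splitting pairs of edges through $v$ according to whether they share a second vertex gives $\mathrm{Var}\,d_{H_{i+1}}(v)=O(\gamma_iD_i^2)$.
\end{itemize}
Chebyshev and Markov then give one outcome of the round in which $|V_{i+1}|\approx e^{-\varepsilon}|V_i|$, the waste is $O(\varepsilon^2)|V_i|$, and at most $f(\gamma_i)|V_{i+1}|$ vertices are bad, where $f(\gamma)\to0$ as $\gamma\to0$. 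Exceptional vertices are simply counted as uncovered. With $\tau$ tiny this survives the boundedly many rounds, and no Local Lemma is needed; this is the argument in Alon and Spencer's \emph{The Probabilistic Method}.

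\textbf{Where your route does work.} Exponential concentration is valid for the weaker statement with codegrees at most $D^{1-c}$. There a Bernstein/Freedman-type bounded-differences inequality applies, using $\sum_f(\varepsilon/D)c_f^2=O(D^{2-c})$. That weaker version is all the proof of Theorem~\ref{main} actually uses. There $D\approx n'/\ln n'$, codegrees are at most $\sqrt{n'}$, and $H''$ has at most $D^2$ vertices, so even a union bound suffices.
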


\section{Perfect fractional matching}\label{sec:frac-matching}
In this section we prove the following lemma, which
is the main input for the proof of Theorem~\ref{main}.
\begin{lemma}\label{fracmatc}
Let $\gamma>0$, and let $n$ be sufficiently large.  Let $H$ be a $3$-partite $3$-graph with vertex classes
$V_1,V_2,V_3$, each of size $n$.
If
\[
\rho\bigl(N_H(v)\bigr)>\Bigl(\frac{\sqrt{2}}{2}+\gamma\Bigr)n
\quad\text{for every }v\in V(H),
\]
then $H$ contains a perfect fractional  matching.
\end{lemma}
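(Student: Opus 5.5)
The plan is to argue by LP duality. Suppose $H$ has no perfect fractional matching. Then $\nu^*(H)<n$, so there is a minimum fractional vertex cover $g$ with $g(V(H))=\tau^*(H)<n$. Write $s_i=g(V_i)$, so $s_1+s_2+s_3<n$. For a vertex $v\in V_i$, the link $N_H(v)$ is a bipartite graph between the other two classes $V_j,V_k$. Every link edge $yz$ satisfies $g(y)+g(z)\ge 1-g(v)$. The goal is to convert the spectral hypothesis $\rho(N_H(v))>(\sqrt2/2+\gamma)n$ into a lower bound on $s_j+s_k$ that forces $s_1+s_2+s_3\ge n$, a contradiction.

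\emph{Step 1: the coarse consequence.} By Lemma~\ref{lem:bipartite-spectral}, $e(N_H(v))\ge\rho(N_H(v))^2>(1/2+\sqrt2\gamma)n^2$. So every vertex has degree above $n^2/2$. Summing $g(y)+g(z)$ over link edges gives only $n(s_j+s_k)\ge e(N_H(v))(1-g(v))$. Choosing $v$ of minimum weight in its class and summing over the three classes yields only $s_1+s_2+s_3>3n/5$, which is too weak. So the structure of the link graph must be used, not just its size.

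\emph{Step 2: structure of the link.} Fix $v\in V_i$ with $g(v)\le s_i/n$ and set $c=1-g(v)$. Every edge $yz$ of $N_H(v)$ has $g(y)\ge c/2$ or $g(z)\ge c/2$. Let $A=\{y\in V_j:g(y)\ge c/2\}$ and $B=\{z\in V_k:g(z)\ge c/2\}$. Then $N_H(v)$ is a subgraph of $G_{A,B}$, the graph in which $A$ is complete to $V_k$ and $B$ is complete to $V_j$. By monotonicity of the spectral radius, $\rho(N_H(v))\le\rho(G_{A,B})$. This is an explicit $2\times2$ quotient computation (in the spirit of the even-$n$ construction), of order $\sqrt{(a+b)n-ab}$ up to lower-order terms, where $a=|A|$ and $b=|B|$. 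Roughly, the spectral condition then forces $(a+b)n-ab>n^2/2$. Meanwhile $s_j+s_k\ge (a+b)c/2$.

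\emph{Step 3: refine and optimize.} The threshold $c/2$ is lossy. I would replace it by a level-set argument: order the vertices of $V_j$ by $g$-value and use that the link is contained in a ``threshold'' bipartite graph (a Ferrers graph), in which $y$ and $z$ are adjacent only if $g(y)+g(z)\ge c$. The spectral radius of such a graph can be bounded in terms of the weight sums. The target inequality is roughly $s_j+s_k\ge (1-o(1))\,n\,(1-g(v))$ whenever the link has spectral radius above $(\sqrt2/2+\gamma)n$. This is tight for the extremal example $H_{\mathrm{odd}}$, where $g$ is $1$ on $A_1\cup A_2$. Summing this over the three choices of $i$, with $g(v)\le s_i/n$, gives $2S\ge 3n-S-o(n)$, where $S=s_1+s_2+s_3$. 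The slack $\gamma$ is then used to absorb the $o(n)$ and conclude $S\ge n$, a contradiction.

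\emph{Main obstacle.} The main obstacle is Step 3: proving that a bipartite threshold graph whose weights have small total cannot have spectral radius above $(\sqrt2/2+\gamma)n$. My first attempt would be to show that the extremal configurations are two-level, with $g\in\{0,c\}$ on each side. This would reduce the problem to the quotient-matrix bound from Step 2 and an elementary optimization over $a$ and $b$. If needed, I would fall back on Theorem~\ref{rainfracmat} to handle weights that are spread out, by greedily assembling a fractional matching from edges in the links.
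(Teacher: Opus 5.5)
Your Step~3 relies on a bipartite inequality that is false, so the proposal has a genuine gap. Without the hypergraph, the target says: if $G$ is bipartite on $(V_j,V_k)$ with $|V_j|=|V_k|=n$, $g(y)+g(z)\ge c$ on every edge, and $\rho(G)>(\sqrt2/2+\gamma)n$, then $g(V_j)+g(V_k)\ge (1-o(1))nc$. After scaling by $c$ and applying K\"onig's theorem, this would mean that spectral radius above $n/\sqrt2$ forces a matching of size $(1-o(1))n$.

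Take $G=K_{0.6n,n}$ with $g=c$ on the smaller side and $g=0$ elsewhere. Then $\rho(G)=\sqrt{0.6}\,n>(\sqrt2/2+\gamma)n$ for small $\gamma$, while $g(V_j)+g(V_k)=0.6\,nc$. Your tightness check on $H_{\mathrm{odd}}$ only looked at vertices of $V_3$. For $v\in V_1\setminus A_1$ we have $g(v)=0$, the link is $K_{|A_2|,n}$ with spectral radius $\sqrt{n(n-1)/2}$, essentially at the threshold, and yet $s_2+s_3=(n-1)/2$. The correct statement is asymmetric, and it is exactly Lemma~\ref{bispe}: a cover with $g(V_j)\ge g(V_k)$ and $2g(V_j)+g(V_k)<n$ forces $\rho(G)<(\sqrt2/2+\gamma)n$. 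The graphs $K_{a,n}$ with $a$ slightly above $n/2$ show that the coefficient $2$ cannot be lowered.

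Putting this correct inequality into your scheme still does not close the argument. If you choose $v\in V_i$ with $g(v)\le s_i/n$ and take $s_1=s_2=s_3=s$, the three constraints read $3s\ge n-s$, so you only get $S\ge 3n/4$.

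The missing idea is complementary slackness (Proposition~\ref{comp}(i)). If every vertex of a class $V_i$ had positive weight, a maximum fractional matching would saturate $V_i$. Since each edge meets $V_i$ exactly once, this gives $\nu^*(H)=n$. So when $\tau^*(H)<n$, the heaviest class, say $V_1$ with $s_1\ge s_2\ge s_3$, contains a vertex $v_0$ with $g(v_0)=0$. Then $g$ restricted to $V_2\cup V_3$ is an honest fractional cover of the link of $v_0$, with $s_2\ge s_3$ and $2s_2+s_3\le s_1+s_2+s_3<n$. A single application of the asymmetric lemma contradicts the spectral hypothesis, and no summing over classes is needed.

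What remains is your ``main obstacle'' in its correct form, and there your proposal only expresses the hope that extremal covers are two-level. The paper makes this rigorous with half-duplication. A side is split into two halves of nearly equal $g$-weight, and one half is replaced by a copy of the other; this changes the side-weights by at most $1$ per step. By a Rayleigh-quotient averaging argument (Lemmas~\ref{incre}, \ref{lem:operation}, \ref{lem:value-step}), this costs only $o(n)$ in spectral radius over $O(\log n)$ steps, until $g$ takes at most two values per side. The resulting block graphs are then handled by explicit $2\times2$ computations.
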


It is enough to prove Lemma~\ref{fracmatc} for even $n$.  Indeed, the odd
case follows from the same argument after decreasing the error term slightly;
thus, throughout the proof of Lemma~\ref{fracmatc}, we shall assume that
$n$ is even.

The proof of Lemma~\ref{fracmatc} relies on the following bipartite spectral lemma.
\begin{lemma}\label{bispe}
Let $\gamma>0$, and let $n\in2\mathbb{N}$ be sufficiently large. Let $G$ be a bipartite graph with bipartition $(V_1,V_2)$ such that $|V_1|=|V_2|=n$. Suppose that $G$ admits a fractional vertex cover $g:V(G)\to[0,1]$ satisfying \[ g(V_1)\ge g(V_2)\quad\text{and}\quad 2g(V_1)+g(V_2)<n. \] Then \[ \rho(G)<\Bigl(\frac{\sqrt{2}}{2}+\gamma\Bigr)n. \]
\end{lemma}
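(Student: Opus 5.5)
The plan is to reduce to a graph that is completely determined by $g$, and then to solve a finite-dimensional extremal problem over the possible weight profiles.

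\emph{Step 1: reduce to a threshold graph.} Adding edges never decreases $\rho$, since for a bipartite graph
\[
\rho(G)=\max\Bigl\{\sum_{uv\in E(G)}x_uy_v:\ x,y\ge 0,\ \|x\|=\|y\|=1\Bigr\}.
\]
So we may replace $G$ by the maximal graph $G_g$ with edge set $\{uv: u\in V_1,\ v\in V_2,\ g(u)+g(v)\ge 1\}$, and $g$ remains a fractional vertex cover of it. Order $V_1$ as $u_1,\dots,u_n$ with $g(u_1)\ge\dots\ge g(u_n)$, and $V_2$ similarly. Then $G_g$ is a bipartite Ferrers (difference) graph: neighbourhoods are nested, and $N(u_i)$ is an initial segment of $V_2$ of length $d_i=|\{v: g(v)\ge 1-g(u_i)\}|$.

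\emph{Step 2: discretize.} Round every $g$-value up to the nearest multiple of a small $\varepsilon=\varepsilon(\gamma)$. This keeps $g$ a cover and raises $2g(V_1)+g(V_2)$ by at most $3\varepsilon n$. Consequently it suffices to prove the strengthened bound
\[
\rho(G_g)\le \tfrac{\sqrt2}{2}n+o(n)\quad\text{whenever}\quad g(V_1)\ge g(V_2)-\varepsilon n,\ \ 2g(V_1)+g(V_2)\le (1+3\varepsilon)n
\]
for $\{0,\varepsilon,\dots,1\}$-valued $g$. Now $g$ is described by the proportions $\alpha_j,\beta_j$ of vertices of $V_1,V_2$ carrying weight $j\varepsilon$. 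The graph $G_g$ is then a blow-up of a fixed threshold pattern on $O(1/\varepsilon)$ parts, and $\rho(G_g)/n$ is the spectral radius of an explicit $O(1/\varepsilon)$-dimensional matrix depending continuously on $(\alpha,\beta)$.

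\emph{Step 3: the extremal problem (main obstacle).} We must show that, under the two linear constraints, this normalized spectral radius is at most $\sqrt2/2$. The crude route, Lemma~\ref{lem:bipartite-spectral} combined with
\[
e(G)\le\sum_{uv\in E}\bigl(g(u)+g(v)\bigr)\le n\bigl(g(V_1)+g(V_2)\bigr),
\]
only gives $e(G)<\tfrac23n^2$, hence $\rho<0.82n$. This is too weak: the example $g\equiv\tfrac12$ on $2n/3$ vertices of each side has many edges but spectral radius only $2n/3$.

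So I would argue as follows. First, I would show that an optimal profile uses only two weight levels per side, namely $\{0,1\}$ together with at most one pair of complementary levels $\{t,1-t\}$. The method is a shifting argument: moving weight between adjacent levels, in the direction the Perron vector favours, keeps the constraints and does not decrease $\rho$. This leaves a 2–3 parameter family, for example
\[
\begin{aligned}
&\text{a set } A_1\subseteq V_1 \text{ with } g=1,\quad \text{a set } A_2\subseteq V_2 \text{ with } g=1,\\
&\text{sets } B_1,B_2 \text{ with } g=\tfrac12,\quad \text{and } g=0 \text{ elsewhere},
\end{aligned}
\]
subject to
\[
2|A_1|+|B_1|+|A_2|+\tfrac12|B_2|<n,\qquad |A_1|+\tfrac12|B_1|\ge |A_2|+\tfrac12|B_2|.
\]
For such a profile $\rho^2$ is the largest eigenvalue of a $3\times3$ matrix and can be bounded explicitly. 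I would check that its maximum, $n^2/2$, is attained exactly at $A_1$ of size $n/2$ with $A_2=B_1=B_2=\emptyset$. That is the extremal configuration $K_{n/2,n}$; the hypothesis $g(V_1)\ge g(V_2)$ is exactly what rules out its mirror image. As a fallback for this step, I would use the bound
\[
\rho^2\le\max_{u\in V_1}\sum_{v\in N(u)}d(v),
\]
whose row sums in a Ferrers graph are directly expressible in the $\alpha_j,\beta_j$, and optimize that linear-fractional expression instead.
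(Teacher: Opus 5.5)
Your Steps 1 and 2 are sound, and discretizing into a finite-dimensional profile problem is a genuinely different route from the paper's half-duplication. But there is a genuine gap: Step 3 carries the whole content of the lemma, it is only asserted, and the mechanism you indicate does not work as stated. Moving mass between two adjacent levels of $V_1$ changes $g(V_1)$. Once a constraint is tight you must therefore use mean-preserving moves on three levels $j-1,j,j+1$. For those moves the test ``in the direction the Perron vector favours'' is inconclusive. Use levels $0,\dots,K$ with $K=1/\varepsilon$, let $p_j$ be the Perron entry at level $j$ of $V_1$, and let $Q_k$ be the Perron mass on level $k$ of $V_2$. Spreading changes $x^{\top}Ax$ by $2p_j(Q_{K-j-1}-Q_{K-j})$, and concentrating changes it by $2(p_{j-1}Q_{K-j}-p_{j+1}Q_{K-j-1})$. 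Since $p_j$ is nondecreasing in $j$, these two changes sum to at most $0$, so both can be negative.

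What does work in your framework is convexity. We have $\rho(G_g)^2/n^2=\lambda_{\max}\bigl(\sum_j\alpha_jc_jc_j^{\top}\bigr)$ with $(c_j)_k=\sqrt{\beta_k}\,\mathbf 1[j+k\ge K]$, and this is convex in $\alpha$ for fixed $\beta$. The feasible $\alpha$ form a polytope cut out by $\alpha\ge 0$, $\sum_j\alpha_j=1$ and two bounds on $\sum_j j\alpha_j$, so its vertices have support at most $2$. Doing this once for each side gives two levels per side with no loss. This would be a cleaner substitute for Lemmas~\ref{incre}, \ref{lem:operation}, \ref{lem:value-step} and Claims 3--4. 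Those realize a weighted form of the same convexity on the actual graph, at the cost of $O(\log n)$ rounds and an $O(n/\ln n)$ loss.

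However, convexity yields two \emph{arbitrary} levels per side, not your $\{0,\tfrac12,1\}$ family, and that family is too small. Take the graph where $0.1n$ vertices of $V_1$ are joined to all of $V_2$ and the other $0.9n$ are joined to a fixed $0.4n$-set in $V_2$. The cover with $1$ on the $0.1n$ part, $\tfrac14$ on the $0.9n$ part, $\tfrac34$ on the $0.4n$-set and $0$ elsewhere satisfies both constraints. No $\{0,\tfrac12,1\}$-valued cover of this graph does, even with $O(\varepsilon)$ slack. Similarly, $K_{n,n/2}$ is a limit of feasible pairs ($g\equiv\tfrac13$ on $V_1$, $g=\tfrac23$ on half of $V_2$). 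So $g(V_1)\ge g(V_2)$ does not rule out the mirror image of $K_{n/2,n}$; what it rules out is $g=1$ on almost all of $V_2$.

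Second, even after the reduction the constant $\sqrt2/2$ still has to be proved, and you do not do it. In the paper this takes a four-case analysis. The hard case is the two-step Ferrers graph above: $an$ vertices of $V_1$ complete to $V_2$, the remaining $(1-a)n$ complete to $bn$ vertices of $V_2$, with cover levels $x>y$ on $V_1$ and $1-y,1-x$ on $V_2$. There one must first extract $2a+b<1+O(\eta)$ and $a+2b<1+O(\eta)$ from the two linear constraints (Claim 5). Then one must prove $s+\sqrt{s^2-4ab(1-a)(1-b)}\le\max\{2a+b,a+2b\}$ with $s=a+b-ab$ (Claim 6).

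Your fallback cannot replace this. In that graph $\max_u\sum_{v\in N(u)}d(v)=(a+b-ab)n^2$, which for feasible $a=b$ close to $\tfrac13$ is about $\tfrac59 n^2$, so it only gives $\rho\lesssim 0.745n$. Finally, for fixed $\varepsilon$ the relaxed problem in Step 2 has value above $\tfrac{\sqrt2}{2}n$: take $g=1$ on $(\tfrac12+\tfrac32\varepsilon)n$ vertices of $V_1$. So the target there must be $(\tfrac{\sqrt2}{2}+\gamma)n$ with $\varepsilon\ll\gamma$, not $\tfrac{\sqrt2}{2}n+o(n)$. The two-level optimization therefore has to be carried out with slack, as the paper does with $\eta$.
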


We first prove Lemma \ref{fracmatc} using Lemma \ref{bispe}, and defer the proof of Lemma \ref{bispe} to the end of this section.

\begin{proof}[Proof of Lemma~\ref{fracmatc}]
Let $f:E(H)\to[0,1]$ be a maximum fractional matching in $H$ and let
$g:V(H)\to[0,1]$ be a minimum fractional vertex cover. By linear
programming duality, we have
$f(E(H))=g(V(H))$.

Assume for a contradiction that $H$ has no perfect fractional matching, i.e.
\begin{equation}\label{eq:f-less-n}
f(E(H))<n.
\end{equation}
Reordering the vertex classes if necessary, we may assume
\begin{equation}\label{eq:order-Vi}
g(V_1)\ge g(V_2)\ge g(V_3).
\end{equation}

\medskip
\textbf{Claim 1.}
There exists a vertex $v_0\in V_1$ such that $g(v_0)=0$.

\smallskip
\noindent\emph{Proof of Claim 1.}
Suppose for contradiction that $g(v)>0$ for every $v\in V_1$.
By Proposition~\ref{comp}(i),  we have
$\sum_{e\ni v} f(e)=1$ for every $v\in V_1$. Since every edge contains exactly one vertex 
from $V_1$, we obtain
\[
f(E(H))
  = \sum_{e\in E(H)} f(e)
  = \sum_{v\in V_1} \sum_{e\ni v} f(e)
  = \sum_{v\in V_1} 1
  = n,
\]
which contradicts \eqref{eq:f-less-n}.
This proves Claim 1.

\medskip
Let \(v_0\in V_1\) be the vertex provided by Claim 1, so that \(g(v_0)=0\). Let $G=N_H(v_0)$ be the link graph of $v_0$, viewed as a bipartite graph
with bipartition $(V_2,V_3)$. Let $g': V_2\cup V_3\rightarrow [0,1]$ such that $g'(v)=g(v)$ for all 
$v\in V_2\cup V_3$.

For every edge \(uv\in E(G)\), we have \(\{v_0,u,v\}\in E(H)\). It follows that
\[
g'(u)+g'(v)=g(u)+g(v)= g(u)+g(v)+g(v_0)\geq 1.
\]
Thus \(g'\) is a fractional vertex cover of \(G\). 

From \eqref{eq:f-less-n} and \eqref{eq:order-Vi}, we have
\[
g'(V_2)=g(V_2)\ge g(V_3)=g'(V_3)
\]
and
\[
2g'(V_2)+g'(V_3)
   \le g(V_1)+g(V_2)+g(V_3)
   = g(V(H))
   = f(E(H))
   < n.
\]
Applying Lemma~\ref{bispe} to $G$ gives
$\rho(G)<(\sqrt{2}/2+\gamma)n$, 
which contradicts the hypothesis of Lemma~\ref{fracmatc}.
Hence $H$ contains a perfect fractional matching. 
\end{proof}

It remains to prove Lemma~\ref{bispe}. 
Let $n\ge 1$ and $k\ge 0$ be integers.
A pair $(H,g)$ is said to satisfy property $\mathcal{S}_{n,k}$ if 
$H$ is a bipartite graph with bipartition $(V_1,V_2)$ such that
$|V_1|=|V_2|=n$, 
$g:V(H)\to[0,1]$ is a fractional vertex cover of $H$, and
\[
g(V_1)\ge g(V_2)-k,
\qquad
2g(V_1)+g(V_2)<n+2k.
\]
Define
\[
 \rho_{n,k}:=
        \max\bigl\{\rho(G): (G,g)\text{ satisfies }\mathcal{S}_{n,k}\bigr\}.
\]
\begin{lemma}\label{incre}
Let $n\in\mathbb N$ be sufficiently large. For every integer $k\ge 0$,
\[
\rho_{n,k+1}\le \rho_{n,k}+\sqrt{8n}.
\]
\end{lemma}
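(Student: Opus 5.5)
We take a pair $(G,g)$ satisfying $\mathcal{S}_{n,k+1}$ with $\rho(G)=\rho_{n,k+1}$, and modify it into a pair $(G',g')$ satisfying $\mathcal{S}_{n,k}$. The modification deletes a small number of edges, so that the spectral radius drops by at most $\sqrt{8n}$. The tool for the spectral comparison is Lemma~\ref{lem:bipartite-spectral}, applied to the deleted edges. Write $D=E(G)\setminus E(G')$ and view $D$ as a bipartite graph on $V_1\cup V_2$. Since $A(G)=A(G')+A(D)$ with all matrices nonnegative and symmetric, subadditivity of the largest eigenvalue gives
\[
\rho(G)\le\rho(G')+\rho(D)\le \rho_{n,k}+\sqrt{|D|}.
\]
Hence it suffices to produce $G'$ and $g'$ with $|D|\le 8n$.

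The natural modification lowers the weights of $g$ on a few vertices. Take two vertices $u_1,u_2\in V_2$ of largest $g$-weight and set $g'(u_1)=g'(u_2)=0$, keeping $g'=g$ elsewhere. This decreases $g(V_2)$ by some amount $t\le 2$. We then delete all edges of $G$ incident to $u_1$ or $u_2$, which costs at most $2n$ edges. After this deletion, $g'$ is a fractional vertex cover of $G'$. The two defining inequalities change as follows:
\[
g'(V_1)=g(V_1)\ge g(V_2)-(k+1)=g'(V_2)+t-(k+1),
\]
\[
2g'(V_1)+g'(V_2)=2g(V_1)+g(V_2)-t< n+2k+2-t.
\]
If $t\ge 1$, the first inequality is fine. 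The second needs a drop of $2$ overall, so we also zero out one vertex of largest weight in $V_1$ and delete its edges. This contributes at most a further $n$ edges and decreases $2g(V_1)$ by twice that vertex's weight.

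In general the plan is to zero out a bounded number of vertices (at most $8$ suffices for the budget $8n$), chosen on the appropriate sides. After doing so:
\begin{itemize}
\item $g(V_2)-g(V_1)$ decreases by at least $1$, which handles the first inequality;
\item $2g(V_1)+g(V_2)$ decreases by at least $2$, which handles the second inequality.
\end{itemize}
Zeroing a vertex in $V_1$ works against the first inequality, since it raises $g(V_2)-g(V_1)$. We resolve this by zeroing weight in $V_2$ to compensate: removing total weight $a$ from $V_1$ and $b$ from $V_2$ requires
\[
b-a\ge 1 \quad\text{and}\quad 2a+b\ge 2.
\]
For example, $a\ge 1/3$ and $b\ge a+1$ satisfy both.

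The main obstacle is that individual weights may be tiny, so removing a bounded number of vertices need not remove a constant amount of weight. To handle this, we first round or normalize $g$. A standard trick is to replace $g$ by a half-integral cover, since the vertex cover polytope of a bipartite graph is integral. That makes every positive weight at least $1/2$ and keeps the totals no larger, or at least keeps the constraints, using the optimal cover within the relevant linear objective.

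Cases where a side has almost no positive weight need separate treatment. If $g(V_1)<1$, the first inequality, together with $2g(V_1)+g(V_2)<n+2k+2$, essentially forces most edges to be covered from $V_2$, and we then argue directly. If $g(V_2)$ is small, we instead lower $g(V_1)$ alone and observe that the first constraint already holds with slack. Each case removes at most $8n$ edges, which gives $\rho_{n,k+1}\le\rho_{n,k}+\sqrt{8n}$.
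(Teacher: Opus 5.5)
Your outer framework matches the paper's: zero out a bounded set of vertices, delete their at most $8n$ incident edges, and use $\rho(G)\le\rho(G')+\rho(D)\le\rho_{n,k}+\sqrt{|D|}$ via Lemma~\ref{lem:bipartite-spectral}. The gap is in the step that carries all the weight. You must produce a bounded vertex set of constant $g$-weight whose removal restores \emph{both} inequalities of $\mathcal S_{n,k}$, and your half-integral normalization does not deliver this.

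Integrality of the bipartite cover polytope only gives an integral optimum for a \emph{single} linear objective. You need to keep $g(V_1)-g(V_2)\ge -(k+1)$ and $2g(V_1)+g(V_2)<n+2k+2$ at the same time, and these pull in opposite directions. For example, let $G$ be complete bipartite between $A\subseteq V_1$ and $B\subseteq V_2$ with $|A|=0.51n$ and $|B|=0.55n$. Let $g=0.6$ on $A$, $g=0.4$ on $B$, and $g=0$ elsewhere. Then $(G,g)$ satisfies $\mathcal S_{n,0}$, but every integral cover contains $A$ or $B$, and so violates one of the two inequalities when $k$ is small compared with $n$. Moreover, no half-integral cover has both totals at most those of $g$, so ``keeps the totals no larger'' is false. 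You give no argument that a half-integral cover satisfying both inequalities exists, and everything downstream depends on it.

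Even granting that normalization, the case analysis does not close. Your uniform recipe ($b\ge a+1$, $2a+b\ge 2$) forces $b\ge 4/3$, which is impossible when $g(V_2)<4/3$. The fallbacks (``argue directly'', ``the first constraint already holds with slack'') are not arguments. Zeroing vertices of $V_1$ only preserves the first inequality if $g(V_1)$ exceeds $g(V_2)$ by a margin, and nothing in your setup guarantees this. For instance, take a star centred at $v\in V_2$ with $g(v)=1$ and $g=0$ elsewhere: the pair $(G,g)$ satisfies $\mathcal S_{n,1}$ but not $\mathcal S_{n,0}$, and $g(V_1)=0$, so there is no slack to use.

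The missing idea is the paper's normalization by \emph{raising} $g$ on $V_1$. This keeps $g$ a fractional vertex cover and only helps the first inequality. By continuity you may stop with $n-1\le 2g(V_1)+g(V_2)<n+2k+2$. After that no rounding is needed:
\begin{itemize}
\item If $g(V_2)\ge n/4$, the $8$ heaviest vertices of $V_2$ carry weight at least $2$. Zeroing them fixes both inequalities at once.
\item If $g(V_2)<n/4$, then $g(V_1)>5n/16$, so the $4$ heaviest vertices of $V_1$ carry weight at least $1$. Zeroing them lowers $2g(V_1)+g(V_2)$ by at least $2$, and $g(V_1)-4>n/4>g(V_2)$ gives the first inequality with room to spare.
\end{itemize}
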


\begin{proof}
Let \((H,h)\) be a pair  satisfying property \(\mathcal S_{n,k+1}\). By increasing some values of $h$ on $V_1$, if necessary, we may assume that \(2h(V_1)+h(V_2)\ge n-1\).

\medskip
 \textbf{Claim 2.~}There exists a subgraph \(H'\) of \(H\) and a function \(h'\) defined on \(V(H')\) such that \(|E(H)\setminus E(H')|\le 8n\) and \((H',h')\) satisfies property \(\mathcal S_{n,k}\).

 \smallskip
\noindent\emph{Proof of Claim 2.}
We discuss two cases. 

\medskip
 \textbf{Case 1.~}$h(V_2)\ge n/4$.
\medskip

By averaging, there exists a subset $S\subseteq V_2$ with $|S|=8$ satisfying $h(S)\ge 2$.
Let $H'$ be the graph obtained from $H$ by removing all edges incident to $S$, and define
\[
h'(v)=
\begin{cases}
0, & v\in S,\\
h(v), & v\in V(H)\setminus S.
\end{cases}
\]
Then $h'$ is a fractional vertex cover of $H'$.  Moreover,
\[
h'(V_1)=h(V_1),\qquad h'(V_2)=h(V_2)-h(S)\le h(V_2)-2.
\]
Consequently, $h'(V_1)\ge h'(V_2)-k$, and
\[
2h'(V_1)+h'(V_2)\le \big(2h(V_1)+h(V_2)\big)-2 < n+2k.
\]
Thus $(H',h')$ satisfies property $\mathcal S_{n,k}$.

\medskip
\textbf{Case 2.}~$h(V_2)<n/4$.
\medskip

Since \(2h(V_1)+h(V_2)\ge n-1\), we have
\[
h(V_1)\ge \frac{n-1-h(V_2)}{2} > \frac{5n}{16}.
\]
By averaging, there exists a subset $S\subseteq V_1$ with $|S|=4$ and $h(S)\ge 1$.
Let $H'$ be the graph obtained from $H$ by removing all edges incident to $S$, and
define $h'$ as above.  Again $h'$ is a fractional vertex cover of $H'$.
Furthermore, we have $h'(V_2)=h(V_2)<n/4$, and
\[
\frac{5n}{16}-4 < h(V_1)-4 \le h'(V_1) = h(V_1)-h(S) \le h(V_1)-1.
\]
Consequently, $h'(V_1)\ge h'(V_2)-k$, and
\[
2h'(V_1)+h'(V_2)\le \big(2h(V_1)+h(V_2)\big)-2 < n+2k.
\]
Hence $(H',h')$ again satisfies property $\mathcal S_{n,k}$.
This completes the proof of Claim 2. 
\qed

Let $F$ be the graph with
edge set $E(H)\setminus E(H')$ on the same vertex set.  Then
$H=H'\cup F$, and hence
\[
\rho(H)\le \rho(H')+\rho(F).
\]
Applying Lemma~\ref{lem:bipartite-spectral}, we obtain \(\rho(F)\le \sqrt{8n}\). Consequently,
\[
\rho(H)\le \rho(H')+\sqrt{8n}\le \rho_{n,k}+\sqrt{8n}.
\]
Taking the maximum over all pairs $(H,h)$ satisfying property
$\mathcal S_{n,k+1}$ gives
\(\rho_{n,k+1}\le \rho_{n,k}+\sqrt{8n}.\)
\end{proof}

Let $G$ be a bipartite graph with bipartition $(V_1,V_2)$, where
$|V_1|=|V_2|=n$, and let $g$ be a fractional vertex cover of $G$. We describe a half--duplication operation on \(V_1\). The  operation on \(V_2\) is defined analogously.

Assume that \(n\) is even. Order the vertices of \(V_1\) as \(V_1=\{u_1,u_2,\dots,u_n\}\) so that
\[
g(u_1)\ge g(u_2)\ge\cdots\ge g(u_n).
\]
For each \(t\in\{1,2,\dots, n/2+1\}\), define
\[
S_t:=\{u_t,u_{t+1},\dots,u_{t+ n/2-1}\},
\]
and set \(F(t):=g(S_t)\).
Then for \(1\le t < n/2\), we have $F(t+1)\le F(t)$ and $|F(t+1)-F(t)|\le 1$.
Moreover,
\[
F(1)\ge \frac12 g(V_1)\ge F(n/2+1).
\]
Hence there exists an index $t\in\{1,\dots,n/2+1\}$ such that
\begin{equation}\label{eq:balance}
\big|2F(t)-g(V_1)\big|\le 1.
\end{equation}

Fix such an index \(t\), and let \(V_1^1:=S_t\) and \(V_1^2:=V_1\setminus S_t\).  Let \(\varphi:V_1^1\to V_1^2\) be a bijection, and set \(\psi:=\varphi^{-1}\).

We now construct two bipartite graphs \(G_1\) and \(G_2\), both with bipartition \((V_1,V_2)\) by duplicating the neighbourhoods:
\[
N_{G_1}(x)=
\begin{cases}
N_G(x) & x\in V_1^1,\\
N_G(\psi(x)) & x\in V_1^2,
\end{cases}
\qquad
N_{G_2}(x)=
\begin{cases}
N_G(x) & x\in V_1^2,\\
N_G(\varphi(x)) & x\in V_1^1.
\end{cases}
\]
Respectively, we define fractional vertex covers \(g_1\) for \(G_1\) and \(g_2\) for \(G_2\). Both functions coincide with \(g\) on \(V_2\), while on \(V_1\) they follow the same duplication pattern:
\[
g_1(x)=
\begin{cases}
g(x) & x\in V_1^1\cup V_2,\\
g(\psi(x)) & x\in V_1^2,
\end{cases}
\qquad
g_2(x)=
\begin{cases}
g(x) & x\in V_1^2\cup V_2,\\
g(\varphi(x)) & x\in V_1^1.
\end{cases}
\]

In this way we obtain, from the original pair \((G,g)\), two new pairs \((G_1,g_1)\) and \((G_2,g_2)\) via the half--duplication procedure.

\begin{lemma}\label{lem:operation}
Let \(G\) be a bipartite graph with bipartition \((V_1,V_2)\), and let \(g\) be a fractional vertex cover of \(G\). Suppose that \((G,g)\) satisfies \(\mathcal{S}_{n,k}\).
Apply the half-duplication operation on \(V_1\), yielding the pairs \((G_1,g_1)\) and \((G_2,g_2)\) as defined above.
Then the following hold:
\begin{itemize}
\item[(i)] For each $i\in\{1,2\}$, the pair $(G_i,g_i)$ satisfies \(\mathcal{S}_{n,k+1}\).
\item[(ii)] There exist \(\alpha,\beta\in[1/2,3/2]\) with \(\alpha+\beta=2\) such that
\begin{equation}\label{eq:weighted-rayleigh}
\alpha\rho(G_1)+\beta\rho(G_2)\ge 2\rho(G).
\end{equation}
\end{itemize}
\end{lemma}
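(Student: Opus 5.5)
The plan is to prove (i) by direct bookkeeping of the covering condition and the two linear constraints, and (ii) by plugging test vectors built from a Perron vector of $G$ into the Rayleigh quotients of $G_1$ and $G_2$.

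\textbf{Part (i).} First check that $g_1$ is a fractional vertex cover of $G_1$, and similarly for $g_2$ and $G_2$.
\begin{itemize}
\item An edge $xy$ of $G_1$ with $x\in V_1^1$ is an edge of $G$, so $g_1(x)+g_1(y)=g(x)+g(y)\ge 1$.
\item An edge $xy$ with $x\in V_1^2$ has $y\in N_G(\psi(x))$, so $g_1(x)+g_1(y)=g(\psi(x))+g(y)\ge 1$.
\end{itemize}
Next compute the weights on $V_1$. We have $g_1(V_1)=2g(V_1^1)=2F(t)$ and $g_2(V_1)=2g(V_1^2)=2(g(V_1)-F(t))$. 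By \eqref{eq:balance}, both satisfy $|g_i(V_1)-g(V_1)|\le 1$, and on $V_2$ we have $g_i(V_2)=g(V_2)$. Hence
\[
g_i(V_1)\ge g(V_1)-1\ge g(V_2)-(k+1)=g_i(V_2)-(k+1),
\]
and
\[
2g_i(V_1)+g_i(V_2)\le 2g(V_1)+g(V_2)+2<n+2(k+1).
\]
So $(G_i,g_i)$ satisfies $\mathcal S_{n,k+1}$.

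\textbf{Part (ii).} If $\rho(G)=0$, take $\alpha=\beta=1$ and the inequality is trivial. Otherwise, let $x\ge 0$ be a unit eigenvector of $A(G)$ for $\rho=\rho(G)$; a nonnegative one exists by Perron--Frobenius even if $G$ is disconnected.

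Set $a=\sum_{u\in V_1^1}x_u^2$, $b=\sum_{u\in V_1^2}x_u^2$ and $c=\sum_{y\in V_2}x_y^2$. Multiplying the eigen-equation $\rho x_u=\sum_{y\sim u}x_y$ by $x_u$ and summing over $V_1$, and likewise over $V_2$, gives $\rho(a+b)=\sum_{uy\in E(G)}x_ux_y=\rho c$. Hence $a+b=c=1/2$.

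Write $P_i=\sum_{u\in V_1^i}x_u\sum_{y\in N_G(u)}x_y$, so that $\rho=x^{T}A(G)x=2(P_1+P_2)$.

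For $G_1$, define the test vector $z$ by $z_u=x_u$ on $V_1^1\cup V_2$ and $z_v=x_{\psi(v)}$ on $V_1^2$. Each $v\in V_1^2$ has the same neighbourhood as $\psi(v)$, so it contributes the same as $\psi(v)$. Therefore
\[
z^{T}A(G_1)z=2\cdot 2P_1=4P_1, \qquad \|z\|^2=2a+c=2a+\tfrac12 .
\]
The Rayleigh quotient then gives $(2a+\tfrac12)\rho(G_1)\ge 4P_1$. Symmetrically, $(2b+\tfrac12)\rho(G_2)\ge 4P_2$.

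Take $\alpha=2a+\tfrac12$ and $\beta=2b+\tfrac12$. Since $a,b\in[0,1/2]$ and $a+b=1/2$, we get $\alpha,\beta\in[1/2,3/2]$ and $\alpha+\beta=2$. Adding the two inequalities yields
\[
\alpha\rho(G_1)+\beta\rho(G_2)\ge 4(P_1+P_2)=2\rho(G).
\]

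The only delicate step is the identity $a+b=c=1/2$ for a bipartite Perron vector. It is what makes $\alpha+\beta=2$ and pins $\alpha,\beta$ inside $[1/2,3/2]$. It follows from the eigen-equation argument above whenever $\rho>0$.
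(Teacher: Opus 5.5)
Your proof is correct and follows the paper's argument essentially step for step. Part (i) uses $|2F(t)-g(V_1)|\le 1$ together with $g_i(V_2)=g(V_2)$, and part (ii) uses the duplicated Perron test vectors in the Rayleigh quotients of $G_1,G_2$ plus the bipartite identity that each side carries squared mass $1/2$. You also explicitly check that $g_i$ is a fractional vertex cover of $G_i$ and handle the edgeless case $\rho(G)=0$; the paper leaves both implicit.
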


\begin{proof}
By construction, we have $g_i(V_2)=g(V_2)$ for $i=1,2$, $g_1(V_1)=2g(V_1^1)$, and $g_2(V_1)=2g(V_1^2)$.
From inequality  \eqref{eq:balance}, it follows that $|g_i(V_1)-g(V_1)|\le 1$ for $i\in [2]$. Consequently, for each $i\in [2]$, we have
\[
g_i(V_1)\ge g(V_1)-1\ge g(V_2)-(k+1),
\]
and
\[
2g_i(V_1)+g_i(V_2)\le (2g(V_1)+g(V_2))+2<n+2(k+1).
\]
Thus (i) holds.

We now prove (ii). Write the adjacency matrix of $G$
with respect to the partition $(V_1^1,V_1^2,V_2)$:
\[
A=
\begin{pmatrix}
0 & 0 & B_1\\
0 & 0 & B_2\\
B_1^\top & B_2^\top & 0
\end{pmatrix}.
\]
Let \(x=(y_1^\top,y_2^\top,y_3^\top)^\top\) be a unit Perron eigenvector  corresponding to \(\rho(G)\). 
Then
\begin{align}\label{rho(G)-eq1}
\rho(G)=x^\top A x=2y_1^\top B_1y_3+2y_2^\top B_2y_3.
\end{align}
The adjacency matrices of \(G_1\) and \(G_2\) are given by
\[
A_1=
\begin{pmatrix}
0 & 0 & B_1\\
0 & 0 & B_1\\
B_1^\top & B_1^\top & 0
\end{pmatrix},
\qquad
A_2=
\begin{pmatrix}
0 & 0 & B_2\\
0 & 0 & B_2\\
B_2^\top & B_2^\top & 0
\end{pmatrix}.
\]
Define \(x^{(1)}=(y_1^\top,y_1^\top,y_3^\top)^\top\) and \(x^{(2)}=(y_2^\top,y_2^\top,y_3^\top)^\top\).
A direct computation gives
\begin{align}
(x^{(1)})^\top A_1 x^{(1)}=4y_1^\top B_1y_3\leq \|x^{(1)}\|^2\,\rho(G_1),\label{rho(G)_1}\\
(x^{(2)})^\top A_2 x^{(2)}=4y_2^\top B_2y_3\leq \|x^{(2)}\|^2\,\rho(G_2).\label{rho(G)_2}
\end{align}
From \eqref{rho(G)-eq1}, \eqref{rho(G)_1} and \eqref{rho(G)_2}, we get
\begin{align*}
2\rho(G)&=(x^{(1)})^\top A_1 x^{(1)}+(x^{(2)})^\top A_2 x^{(2)}\\
&\leq \|x^{(1)}\|^2\,\rho(G_1)+\|x^{(2)}\|^2\,\rho(G_2)\\
&=\alpha\rho(G_1)+\beta\rho(G_2),
\end{align*}
where  $\alpha:=\|x^{(1)}\|^2$, $\beta:=\|x^{(2)}\|^2$ 
and $\alpha+\beta=2$.

It remains to verify that 
 $\alpha,\beta\in[1/2,3/2]$.
Recall that $G$ is bipartite. So we may write the adjacency matrix of $G$ as $A=\begin{pmatrix}0&B\\ B^\top&0\end{pmatrix}$
with respect to the bipartition  $(V_1,V_2)$.
Decompose the unit Perron vector as $x=(u^\top, v^\top)^\top$, where $u$ corresponds to $V_1$ and $v$ to $V_2$.
From $B^\top u=\rho(G)v$ and $B v=\rho(G)u$, we obtain
\[
\rho(G)\|u\|^2=u^\top Bv=\rho(G)\|v\|^2,
\]
which implies 
 $\|u\|^2=\|v\|^2=1/2$.
Recall that $v=y_3$ in our earlier notation. Thus
\[
\alpha=2\|y_1\|^2+\|y_3\|^2\in[1/2,3/2],
\qquad
\beta=2\|y_2\|^2+\|y_3\|^2\in[1/2,3/2].
\]
This completes the proof.
\end{proof}

\begin{lemma}\label{lem:value-step}
Let $k\in\mathbb N$ and let $\Delta\ge 0.$ 
Assume $(G,g)$ satisfies $\mathcal{S}_{n,k}$ and $\rho(G)\ge \rho_{n,0}-\Delta$.
Let $(G_1,g_1)$ and $(G_2,g_2)$ be the two pairs produced by applying one half--duplication operation to one side of $(G,g)$.
Then for each \(i\in\{1,2\}\),
\[
\rho(G_i)\le \rho_{n,k+1}
\qquad\text{and}\qquad
\rho(G_i)\ge \rho_{n,0}-\Delta',
\]
where
\[
\Delta' := 4\Delta + 3(k+1)\sqrt{8n}.
\]
\end{lemma}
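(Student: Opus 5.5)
The upper bound is immediate from Lemma~\ref{lem:operation}(i): each $(G_i,g_i)$ satisfies $\mathcal S_{n,k+1}$, so by the definition of $\rho_{n,k+1}$ we get $\rho(G_i)\le\rho_{n,k+1}$. (If the operation is applied to $V_2$ instead of $V_1$, I would first check that the analogue of Lemma~\ref{lem:operation}(i) holds with the roles of the sides exchanged. Duplicating on $V_2$ changes $g(V_2)$ by at most $1$. Hence $g(V_1)\ge g(V_2)-(k+1)$ still holds, and $2g(V_1)+g(V_2)$ increases by at most $1<2$, so property $\mathcal S_{n,k+1}$ is preserved in this case as well. Part (ii) is symmetric because $\|u\|^2=\|v\|^2=1/2$.)

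For the lower bound, I would iterate Lemma~\ref{incre} $k+1$ times to get
\[
\rho_{n,k+1}\le \rho_{n,0}+(k+1)\sqrt{8n}.
\]
Then I combine this with Lemma~\ref{lem:operation}(ii). Fix $i\in\{1,2\}$ and let $j$ be the other index, with weights $\alpha_i,\alpha_j\in[1/2,3/2]$ summing to $2$. From $\alpha_i\rho(G_i)+\alpha_j\rho(G_j)\ge 2\rho(G)\ge 2\rho_{n,0}-2\Delta$ and $\rho(G_j)\le\rho_{n,0}+(k+1)\sqrt{8n}$ we obtain
\[
\alpha_i\rho(G_i)\ge 2\rho_{n,0}-2\Delta-\alpha_j\rho_{n,0}-\alpha_j(k+1)\sqrt{8n}
=\alpha_i\rho_{n,0}-2\Delta-\alpha_j(k+1)\sqrt{8n}.
\]
Dividing by $\alpha_i\ge1/2$ and using $\alpha_j/\alpha_i\le 3$ gives
\[
\rho(G_i)\ge\rho_{n,0}-4\Delta-3(k+1)\sqrt{8n}=\rho_{n,0}-\Delta'.
\]

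The main point needing care is this division step. The bound $\alpha_j/\alpha_i\le 3$ uses both $\alpha_i\ge 1/2$ and $\alpha_j\le 3/2$ simultaneously, which is exactly what Lemma~\ref{lem:operation}(ii) provides. The other point to check is that $\rho_{n,0}$ is the right reference value even though $(G,g)$ only satisfies $\mathcal S_{n,k}$. The argument only uses the hypothesis $\rho(G)\ge\rho_{n,0}-\Delta$ together with the upper bound on $\rho(G_j)$, so nothing more about $\rho_{n,k}$ is needed.
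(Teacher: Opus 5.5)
Your proof is correct and takes essentially the same route as the paper: you get the upper bound from Lemma~\ref{lem:operation}(i), and the lower bound by combining the weighted inequality of Lemma~\ref{lem:operation}(ii) with the iterated bound $\rho_{n,k+1}\le\rho_{n,0}+(k+1)\sqrt{8n}$ from Lemma~\ref{incre}. The differences are cosmetic: the paper bounds $m=\min\{\rho(G_1),\rho(G_2)\}$ by putting the worst-case weight $3/2$ on $\rho_{n,k+1}$, whereas you fix $i$ and use $\alpha_j/\alpha_i\le 3$; your check that Lemma~\ref{lem:operation} also holds for duplication on $V_2$ fills in a detail the paper leaves implicit.
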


\begin{proof}
The upper bound follows immediately from Lemma~\ref{lem:operation}(i). Indeed,
each pair $(G_i,g_i)$ belongs to $\mathcal S_{n,k+1}$, and hence
$\rho(G_i)\le \rho_{n,k+1}$ for $i=1,2$. 

Let $m:=\min\{\rho(G_1),\rho(G_2)\}$. By Lemma~\ref{lem:operation}(ii), there exist $\alpha,\beta\in[1/2,3/2]$ with $\alpha+\beta=2$ such that
\[2\rho(G)\le \alpha\rho(G_1)+\beta\rho(G_2).\]
Let $A:=\max\{\alpha,\beta\}\le 3/2$ and 
$B:=\min\{\alpha,\beta\}\ge 1/2$.
Since $\rho(G_i)\le \rho_{n,k+1}$, we obtain
\[
2\rho(G)\le A\,\rho_{n,k+1}+B\,m\le \frac32\rho_{n,k+1}+\frac12 m.
\]
By iterating Lemma~\ref{incre}, we have
$\rho_{n,k+1}\le \rho_{n,0}+(k+1)\sqrt{8n}$. Together with the assumption $\rho(G)\ge \rho_{n,0}-\Delta$, this yields
\[
m\ge \frac{2(\rho_{n,0}-\Delta)-\frac32\bigl(\rho_{n,0}+(k+1)\sqrt{8n}\bigr)}{\frac12}
= \rho_{n,0}-4\Delta-3(k+1)\sqrt{8n}.
\]
This completes the proof.
\end{proof}

We now prove Lemma~\ref{bispe}.
\begin{proof}[Proof of Lemma~\ref{bispe}]
Fix $1/n\ll \eta\ll \gamma\ll 1$.
Let $(H,h)$ satisfy property $\mathcal S_{n,0}$ and $\rho(H)=\rho_{n,0}$.
By adding all edges $uv$ satisfying $h(u)+h(v)\ge 1$, we may assume that
$H$ is maximal with respect to the fractional vertex cover $h$.

Let $r:=\lceil\log_2 n\rceil$.
Starting from $(H,h)$, apply the half--duplication operations on $V_1$ successively $r$ times,
retaining at each step one of the two resulting
pairs with larger spectral radius. Then apply the same procedure on $V_2$, again for $r$ steps.
Let $(H_1,h_1)$ denote the resulting pair.

\medskip
\textbf{Claim 3.} The pair $(H_1,h_1)$ satisfies the following properties.
\begin{itemize}
\item[(i)] $(H_1,h_1)$ satisfies property $\mathcal S_{n,2r}$;
\item[(ii)] $\rho(H_1)\ge \rho_{n,0}$;
\item[(iii)] On each of $V_1$ and $V_2$, the function $h_1$ takes at most
    $2r+1$ distinct values.
\end{itemize}

\smallskip
\noindent\emph{Proof of Claim 3.}
\emph{(i)} By Lemma~\ref{lem:operation}(i), each operation increases the parameter in $\mathcal S_{n,k}$ by at most one.  Since we perform
$2r$ operations in total, the final pair satisfies $\mathcal S_{n,2r}$.

\smallskip
\noindent\emph{(ii)} By Lemma~\ref{lem:operation}(ii), at each step at least one of the two resulting graphs has spectral radius at least that of the current graph. Since we always continue with the output having larger spectral radius, the spectral radius is non-decreasing throughout the process. Consequently, $\rho(H_1)\ge \rho(H)=\rho_{n,0}$.

\smallskip
\noindent\emph{(iii)} We prove the statement for \(V_1\); the argument for \(V_2\) is identical.  During each half--duplication operation on $V_1$, the
vertices of $V_1$ are ordered according to their current $h$-values, and one
of the two halves is duplicated. Because the selected set is either an interval or the complement of an interval, at most two level sets are split (i.e., partially selected) at this step. Equivalently, at each step at most two distinct current values are selected only partially.

We call a value appearing on \(V_1\)  \emph{exceptional} if, at some stage of the \(r\) operations on \(V_1\), its level set is partially selected. Since each step contributes at most two exceptional values, the total number of exceptional values that survive to the final cover \(h_1\) is at most \(2r\).

Now consider a value $\alpha$ appearing on $V_1$ in the final cover $h_1$ which is not exceptional.
Then at every one of the $r$ steps, the entire level set of value $\alpha$ is either retained and duplicated, or discarded.
Because \(\alpha\) survives to the end, it is never discarded; hence at each step its multiplicity doubles. 
Consequently, the multiplicity of \(\alpha\) in the final cover is at least \(2^r\). 
Since \(|V_1| = n\) and \(2^r\geq n\), there can be at most one non-exceptional value in the final
function on $V_1$.

Consequently, the total number of distinct values taken by $h_1$ on $V_1$ is at most
\[
2r+1=2\lceil \log_2 n\rceil+1.
\]
The same argument applies to $V_2$.  This proves (iii), and hence the claim.

\medskip
We next apply further half--duplication operations to \(V_1\) and, symmetrically, to \(V_2\). We shall use the following simple observation.

\medskip
\textbf{Claim 4.}
 Perform \(2r-1\) additional half--duplication operations on \(V_1\), starting from \((H_1, h_1)\), where at each operation we choose the output with larger spectral radius. Then one of the following holds:

\begin{itemize}
    \item[(i)] After these \(2r-1\) operations, the resulting fractional vertex cover takes at most two distinct values on \(V_1\).
    \item[(ii)] There exists an index \(t \in \{1,\dots,2r-1\}\) such that at the \(t\)-th operation the number of distinct values on \(V_1\) does not change.
\end{itemize}

\smallskip
\noindent\emph{Proof of Claim 4.}
By Claim 3 (iii), the number of distinct values that \(h_1\) takes on \(V_1\) is at most \(2r+1\). Moreover, a half--duplication operation never introduces a new value. Consequently, if the number of distinct values decreases at every one of the \(2r-1\) steps, then after these steps it is at most \((2r+1)-(2r-1)=2\); hence  (i) holds. Otherwise, the number of distinct values is unchanged at some step, and (ii) follows. This completes the proof of Claim 4.

Next we reduce the number of distinct values on each side to at most two.

We first treat \(V_1\).  Apply the half--duplication
operation to \(V_1\) as in Claim~4.  
If Claim~4 (i) holds, then after these operations we obtain a pair \((\widehat H,\widehat h)\) such that \(\widehat h\) takes at most two distinct values on \(V_1\).  
By Lemma~\ref{lem:operation} (ii),  at each step at least one of the two outputs has spectral radius not smaller than that of the current graph. Since we always keep the output with larger spectral radius, it follows that
\[
\rho(\widehat H)\ge \rho(H_1)\ge \rho_{n,0}.
\]

Now suppose Claim 4 (ii) holds, and let \(t\in[2r-1]\) be such that, at the \(t\)-th operation on \(V_1\), the number of distinct values on \(V_1\) does not change.  
Denote by \((G,g)\) the pair just before this operation, and by \((G_1,g_1)\) and \((G_2,g_2)\) the two outputs.  
By definition of the procedure, we continue with the output of larger spectral radius; assume this is $(G_1,g_1)$.
Since the number of distinct values on \(V_1\) does not decrease from \((G,g)\) to \((G_1,g_1)\), the selected half must intersect every level set on \(V_1\). Consequently, its complement meets at most two level sets, and therefore \((G_2,g_2)\) takes at most two distinct values on \(V_1\).  
Define  
\[
(\widehat H,\widehat h) := (G_2,g_2).
\]

It remains to estimate $\rho(\widehat H)$. By Claim~3 (i) and Lemma~\ref{lem:operation}(i),
$(G,g)$ satisfies property \(\mathcal S_{n,4r}\).
Moreover, repeated applications of Lemma~\ref{lem:operation}(ii) along the chosen branch yield 
$\rho(G)\ge \rho(H_1)\ge \rho_{n,0}$.
Applying Lemma~\ref{lem:value-step} with $\Delta=0$ gives, for \(n\) sufficiently large,
\[
\rho(\widehat H)\ge \rho_{n,0}-n/\ln n.
\]
Thus, in either case, we obtain a pair $(\widehat H,\widehat h)$ such that $\widehat h$ takes at most two distinct values on $V_1$ and  $\rho(\widehat H)\ge \rho_{n,0}-n/\ln n$.

We now turn to $V_2$. Applying the same argument to $(\widehat H,\widehat h)$ yields a pair $(H_2,h_2)$ such that $h_2$ takes at most two distinct values on each of $V_1$ and $V_2$, and applying Lemma~\ref{lem:value-step} with $\Delta=n/\ln n$ gives
\begin{equation}\label{keysss}
    \rho(H_2)\ge \rho_{n,0}-\frac{5n}{\ln n}.
\end{equation}
By \eqref{keysss}, it suffices to show that
\begin{equation}\label{keysss-2}
    \rho(H_2) \le \frac{(\sqrt{2}+\gamma)n}{2}.
\end{equation}

Since $(H_2,h_2)$ is obtained after at most $6r$ operations,
the pair $(H_2,h_2)$ satisfies property \(\mathcal S_{n,6r}\).
In particular
\begin{equation}\label{eq:eta-slack}
h_2(V_1)\ge h_2(V_2)-\eta n, \text{ and }
2h_2(V_1)+h_2(V_2)<(1+\eta)n.
\end{equation}
Moreover, by construction $H_2$ is maximal with respect to $h_2$. We now analyze this two-value case.

Write $V_1=V_{1,x}\cup V_{1,y}$ and $V_2=V_{2,z}\cup V_{2,w}$ with
$|V_{1,x}|=an$, $|V_{1,y}|=(1-a)n$ and $|V_{2,z}|=bn$, $|V_{2,w}|=(1-b)n$ for some
$a,b\in[0,1]$.
Assume that $h_2$ takes value \(x,y,z,w\) on \(V_{1,x},V_{1,y},V_{2,z},V_{2,w}\), respectively. The cases where one side has only one value are included by allowing
one of the corresponding blocks to be empty.
We may assume $x> y$ and $z> w$.

Since $h_2$ takes at most two distinct values on each side and $H_2$ is maximal with respect to $h_2$, the adjacency pattern among these four blocks is completely determined by which of the sums
\[
x+z,\ x+w,\ y+z,\ y+w
\]
are at least~$1$. From \eqref{eq:eta-slack},  we have
\[
y+w\leq (h_2(V_1)+h_2(V_2))/n<1.
\]
If \(x+z<1\), then \(H_2\) has no edges and \eqref{keysss-2}
is immediate. We may therefore assume \(x+z\ge1\), and it remains
to consider the following four cases.

\medskip
\textbf{Case 1.} $x+z\ge1$, $x+w<1$ and $y+z<1$.
\medskip

In this case, all edges of \(H_2\) lie between \(V_{1,x}\)
and \(V_{2,z}\), and the subgraph between these two blocks is
complete bipartite. Hence
\begin{equation}\label{eq:case1-rho}
\rho(H_2)\le\sqrt{|V_{1,x}||V_{2,z}|}=\sqrt{ab}\,n.
\end{equation}

We may assume \(y=w=0\) and \(z=1-x\). Indeed, vertices in \(V_{1,y}\cup V_{2,w}\) are isolated and can be assigned value \(0\); moreover, because all edges are between \(V_{1,x}\) and \(V_{2,z}\), the condition \(x+z\ge 1\) must hold, and we may reduce \(z\) to \(1-x\) without affecting the edge set. Then
\[
h_2(V_1)=anx,\qquad h_2(V_2)=bn(1-x).
\]
By \eqref{eq:eta-slack}, we have
\begin{align*}
anx\ge bn(1-x)-\eta n,
\end{align*}
and
\begin{align*}
2anx+bn(1-x)<(1+\eta)n.
\end{align*}
Thus we have
\[
(a+b)x\ge b-\eta
\qquad\text{and}\qquad
(2a-b)x<1-b+\eta,
\]
which yields
\begin{equation}
  \label{eq:ab-constraint}
  (2a-b)(b-\eta)<(a+b)(1-b+\eta),
\end{equation}
whenever \(2a-b\ge0\). If \(2a-b<0\), then \(ab\le 1/2\), and
the desired estimate follows immediately from \eqref{eq:case1-rho}.

Since \((1-a)(1-b)\ge0\), we have \(a+b\le1+ab\). Combining this with \eqref{eq:ab-constraint} yields  $ab \le 1/2 + 3\eta$.
Since \(\eta \ll \gamma\), we have
\[
\rho(H_2) \le \sqrt{ab}\; n \le \Bigl(\frac{\sqrt2}{2} + \frac{\gamma}{2}\Bigr)n.
\]

\medskip
\textbf{Case 2.} $x+z\ge1$, $y+z\ge1$ and $x+w<1$.
\medskip

In this case, both $V_{1,x}$ and $V_{1,y}$ are adjacent to $V_{2,z}$ (since $x+z\ge1$ and $y+z\ge1$), whereas $V_{2,w}$ consists of isolated vertices in $H_2$ because $x+w<1$ and $y+w<1$.  
Set $a_1 := h_2(V_1)/n$, so that $y \le a_1 \le x$.  
Define \(g''\) on \(V_1\cup V_2\) by replacing \(h_2\) on
\(V_1\) with the constant value \(a_1\), while leaving \(h_2\)
unchanged on \(V_2\). Then \(g''(V_1)=h_2(V_1)\) and
\(g''(V_2)=h_2(V_2)\), so \eqref{eq:eta-slack} remains valid. 
Moreover, the maximal graph determined by \(g''\) coincides with
\(H_2\): indeed, \(a_1\ge y\) gives
\(a_1+z\ge y+z\ge1\), while \(a_1\le x\) gives
\(a_1+w\le x+w<1\). Thus we reduce to the boundary subcase
of Case~1 with \(a=1\). Applying the estimate from Case~1 gives
\[
\rho(H_2) < \Bigl(\frac{\sqrt2}{2}+\frac{\gamma}{2}\Bigr)n.
\]

\medskip
\textbf{Case 3.} $x+z\ge1$, $x+w\ge1$ and $y+z<1$.
\medskip

This case is symmetric to Case~2. By averaging $h_2$ on $V_2$ (instead of on $V_1$), we reduce to the boundary subcase of Case~1 with $b=1$. Applying the estimate from Case~1 gives
\[
\rho(H_2) < \Bigl(\frac{\sqrt2}{2}+\frac{\gamma}{2}\Bigr)n.
\]

\medskip
\textbf{Case 4.} $x+w\ge1$, $y+z\ge1$ and $y+w<1$.
\medskip

In this case we may replace \(z\) and \(w\) by \(1-y\)
and \(1-x\), respectively.
Then by \eqref{eq:eta-slack}, we have
\begin{equation}\label{eq:xyab-1}
x+y+(a-b)(x-y)\ge 1-\eta,
\end{equation}
and
\begin{equation}\label{eq:xyab-2}
2y-x+(2a+b)(x-y)<\eta.
\end{equation}
A direct computation gives 
\begin{equation}\label{eq:rho-G0-2x2}
\rho(H_2)= n\sqrt{\frac{s+\sqrt{s^2-4ab(1-a)(1-b)}}{2}},
\end{equation}
where $s:=a+b-ab$.

\medskip
 \textbf{Claim 5.}
\(2a+b<1+6\eta\) and \(2b+a<1+3\eta\).
\medskip
 
\noindent\emph{Proof of Claim 5.}
 If \(2a+b<1\), the first inequality is immediate.
Assume therefore that \(2a+b\ge1\). From \eqref{eq:xyab-2} we obtain  
\[
y = 2y - x + (x-y) \le 2y - x + (2a+b)(x-y) < \eta,
\]  
using \(x-y>0\) and \(2a+b\ge1\). Hence \(y <\eta\). Substituting this into \eqref{eq:xyab-1} and \eqref{eq:xyab-2} gives  
\[
(1+a-b)x \ge 1-3\eta \quad \text{and} \quad (2a+b-1)x < 2\eta.
\]  
For sufficiently small \(\eta\), it follows that \(x \ge (1-3\eta)/(1+a-b) \ge 1/3\) (since \(1+a-b \le 2\)), and consequently  
\[
2a+b-1 < \frac{2\eta}{x} \le 6\eta,
\]  
and hence \(2a+b<1+6\eta\).

Combining \eqref{eq:xyab-1} and \eqref{eq:xyab-2} gives  
\begin{equation}\label{eq:xyab-3}
1-\eta - (a-b)(x-y) \le x+y < 2\eta + (3-4a-2b)(x-y).
\end{equation}  
Set \(D := x-y > 0\). Then \eqref{eq:xyab-3} implies
\[
3(1-a-b)D > 1-3\eta.
\]  
Moreover, since \(x \le 1\), we have \(x+y \le 2-D\); combining this with the left inequality in \eqref{eq:xyab-3} yields  
\[
(1+b-a)D \le 1+\eta.
\]  
Thus  
\[
(1-3\eta)(1+b-a) \le 3(1-a-b)(1+\eta).
\]  
A straightforward rearrangement gives \(2b+a<1+3\eta\), for
\(\eta\) sufficiently small. This completes the proof of Claim 5.

\medskip
\textbf{Claim 6.} 
$s+\sqrt{s^2-4ab(1-a)(1-b)}<1+6\eta,$
where $s:=a+b-ab.$
\medskip

\noindent\emph{Proof of Claim 6.}
By Claim 5, it suffices to show that
\[
s+\sqrt{s^2-4ab(1-a)(1-b)}\le \max\{2a+b,\ a+2b\}.
\]
By symmetry, we may assume without loss of generality that  $a\ge b$.
Under this assumption, it is enough to establish
\[
s+\sqrt{s^2-4ab(1-a)(1-b)}\le 2a+b,
\]
or equivalently,
\[
(a+ab)^2-\bigl(s^2-4ab(1-a)(1-b)\bigr)\ge0.
\]
A direct computation shows
\[
(a+ab)^2-\bigl(s^2-4ab(1-a)(1-b)\bigr)
=b\bigl(2a-b+2ab(2a-1)\bigr).
\]
Hence it remains to verify  
\begin{align}\label{eq:18}
    2a-b+2ab(2a-1)\ge0.
\end{align}

If $a\ge 1/2$, then $2a-1\ge0$ and therefore
$2a-b+2ab(2a-1)\ge 2a-b\ge0$. So we may assume that 
$a<1/2$. Define $f(b):=2a-b+2ab(2a-1)$. Since $f'(b)=-1+2a(2a-1)<0$, $f$ is decreasing on $[0,a]$; hence
\[f(b)\ge f(a)=a(4a^2-2a+1)\geq 0.\]
 Thus \eqref{eq:18} holds. This completes the proof of Claim 6.

By Claim 6 and \eqref{eq:rho-G0-2x2}, and since \(\eta\ll\gamma\),
we have
\[
\rho(H_2)<\Bigl(\frac{\sqrt2}{2}+\frac{\gamma}{2}\Bigr)n.
\]
This completes the proof.
\end{proof}
\section{Proof of Theorem~\ref{main}}
Choose constants $\gamma'$ and $\gamma''$ such that $1/n\ll\gamma''\ll\gamma'\ll\gamma$. Since $N_H(v)$ is bipartite with $d_H(v)$ edges, Lemma~\ref{lem:bipartite-spectral} gives
\(\rho(N_H(v))\le \sqrt{d_H(v)}\). Hence
\[
d_H(v) \geq \rho(N_H(v))^2 > \left(\frac{\sqrt{2}}{2}+\gamma\right)^2 n^2.
\]
Thus, by Lemma~\ref{absorb} applied with \(\ell=1\),
\(H\) contains an absorbing matching $M$ with $|M| \leq \gamma' n$, such that for every balanced set $S \subseteq V(H)\setminus V(M)$ with $|S| \leq \gamma'' n$, $H[V(M)\cup S]$ has a perfect matching.

Write $H':=H-V(M)$, and $n'=n-|M|$. For each $v\in V(H')$, the graph $N_{H'}(v)$ is obtained from $N_H(v)$ by deleting at most $2\gamma' n^2$ edges. Hence
\begin{equation}\label{newbound}
\rho\bigl(N_{H'}(v)\bigr)
   \ge \rho\bigl(N_H(v)\bigr)-\sqrt{2\gamma' n^2}
   > \Bigl(\frac{\sqrt{2}}{2}+\frac{\gamma}{2}\Bigr)n',
\end{equation}
where the last inequality follows from \(\gamma'\ll\gamma\).
 
\medskip
\textbf{Claim 7.}
There exist \(r=\bigl\lfloor n'/\ln n'\bigr\rfloor\) perfect fractional matchings \(f_1,\dots,f_r\) in \(H'\) such that, with \(M_i:=\{e\in E(H') : f_i(e)>0\}\) for each \(i\),
\begin{itemize}
  \item[(i)] \(M_1,\dots,M_r\) are pairwise edge-disjoint;
  \item[(ii)] For every \(D\in \binom{V(H')}{2}\),
        \(\displaystyle \sum_{i=1}^r\sum_{\substack{e\in E(H')\\ D\subseteq e}}f_i(e)\le 3.\)
\end{itemize}

 \smallskip
\noindent\emph{Proof of Claim 7.}
By Lemma~\ref{fracmatc} and \eqref{newbound}, $H'$ has a perfect fractional
matching $f_1$. Let $M_1:=\{e\in E(H')\ :\ f_1(e)>0\}$. By Theorem~\ref{rainfracmat}, we may select $f_1$  such that $|M_1|\leq  3n'$. Let $s\ge 1$ be the maximal integer for which there exist perfect fractional matchings
$f_1,\ldots,f_s$ in $H'$ satisfying the following: letting $M_i:=\{e\in E(H'):\ f_i(e)>0\}$, 
\begin{itemize}
\item $|M_i|\le 3n'$ for all $i\in[s]$ and $M_i\cap M_j=\emptyset$ for all $1\le i<j\le s$;

 \item for every $D\in\binom{V(H')}{2}$,
$\sum_{i=1}^s \ \sum_{e\in E(H'):\, D\subseteq e} f_i(e)\le 3 $.
\end{itemize}
We proceed by contradiction and assume that $s < \bigl\lfloor n'/\ln n'\bigr\rfloor$.
Let
\[
U_s:=\Bigl\{D\in \binom{V(H')}{2}:
          \sum_{i=1}^s\sum_{e\in E(H'):\, D\subseteq e} f_i(e)>2
      \Bigr\}
\]
and
\[
E_s:=\{e\in E(H'):\exists\,D\in U_s\text{ such that }D\subseteq e\}.
\]
For $w\in V(H')$, since 
\[
\sum_{i=1}^s\sum_{e\ni w}f_i(e)=s,
\]
there are at most \(s\) vertices \(w'\in V(H')\setminus \{w\}\) such that
\[
\sum_{i=1}^s \sum_{\substack{e\in E(H')\\ \{w,w'\}\subseteq e}} f_i(e) > 2.
\]
Consequently, for any $v\in V(H')$, $d_{H'[E_s]}(v)\leq 3sn'$.
Thus by Lemma~\ref{lem:bipartite-spectral},
\begin{align}\label{edgesss}
    \rho\bigl(N_{H'[E_s\cup (\bigcup_{i=1}^s M_i)]}(v)\bigr)\le \sqrt{d_{H'[E_s]}(v)+\sum_{i=1}^s|M_i|}
 \le \sqrt{6sn'}.
\end{align}

For every $v\in V(H')$, let $\mathbf{x}_v$ be a unit eigenvector of $A(N_{H'}(v))$ corresponding to
$\rho(N_{H'}(v))$.
Then by \eqref{newbound} and \eqref{edgesss},
\begin{align*}
\mathbf{x}_v^{\top}A\bigl(N_{H'-E_s-\bigcup_{i=1}^sM_i}(v)\bigr)\mathbf{x}_v
&= \mathbf{x}_v^{\top} A\bigl(N_{H'}(v)\bigr)\mathbf{x}_v
  -\mathbf{x}_v^{\top} A\bigl(N_{H'[E_s\cup (\bigcup_{i=1}^s M_i)]}(v)\bigr)\mathbf{x}_v\\
&> \Bigl(\frac{\sqrt{2}}{2}+\frac{\gamma}{2}\Bigr)n'-\sqrt{6sn'}\\
&> \Bigl(\frac{\sqrt{2}}{2}+\frac{\gamma}{3}\Bigr)n',
\end{align*}
that is,
\[
\rho\bigl(N_{H'-(E_s\cup\bigcup_{i=1}^sM_i)}(v)\bigr)
 > \Bigl(\frac{\sqrt{2}}{2}+\frac{\gamma}{3}\Bigr)n'
\quad\text{for all }v\in V(H').
\]
By Lemmas~\ref{fracmatc} and Theorem~\ref{rainfracmat}, 
$H'-(E_s\cup\bigcup_{i=1}^sM_i)$
contains a perfect fractional matching $f_{s+1}$ with \(|M_{s+1}|\le 3n'\), where $M_{s+1}:=\{e\in E(H'):f_{s+1}(e)>0\}$.
By the
definition of $U_s$ and $E_s$, we also have
$\sum_{i=1}^{s+1}\sum_{e\in E(H'):\, D\subseteq e} f_i(e)\le 3$
for every $D\in \binom{V(H')}{2}$. 
This contradicts the maximality of $s$, proving Claim~7.

Define \(h:E(H')\to[0,1]\) by
\(h(e):=\sum_{i=1}^r f_i(e)\).
Let 
$R$  be a random spanning subgraph of $H'$ where each edge 
$e$ is independently selected   with probability
 $h(e)$.
Then the following properties hold: 
\begin{itemize}
    \item[(B1)] For every vertex $v\in V(H')$,
$\mathbb{E}[d_R(v)]
 =\sum_{v\ni e} h(e)
 = r$,
 \item[(B2)]   For every $D\in\binom{V(H')}{2}$,
$\sum_{e \supseteq D}h(e)
 \le 3$.
\end{itemize}
By Chernoff's bound and a union bound over all vertices and pairs,
with probability \(1-o(1)\) there is a spanning subgraph \(H''\)
satisfying
\begin{itemize}
    \item[(C1)] For every vertex $v\in V(H'')$,
$n'(1/\ln n'-1/\ln ^2n')\le d_{H''}(v)\le n'(1/\ln n'+1/\ln ^2n')$, and
    \item[(C2)] For every $D\in\binom{V(H'')}{2}$,
$d_{H''}(D)\le \sqrt{n'}$.
\end{itemize}
By Theorem~\ref{nibble}, \(H''\) contains a matching \(\mathcal{M}_1\)
such that
\[
|V(H'')\setminus V(\mathcal{M}_1)|\le \gamma'' n'\le \gamma''n.
\]
By our choice of the absorbing matching $M$, the subgraph 
$H[V(M)\cup (V(H'')\setminus V(\mathcal{M}_1))]$ contains a perfect matching $\mathcal{M}_2$.
Hence $\mathcal{M}_1\cup \mathcal{M}_2$ forms a perfect matching of $H$, which completes the proof of Theorem~\ref{main}. \qed

\bibliographystyle{plain}  
\bibliography{refsv4}
\end{document}